\documentclass[12pt, notitlepage]{article}

\usepackage{mystyle}

\usepackage{graphicx}
\usepackage[centertags]{amsmath}
\usepackage{newlfont}
\usepackage{hyperref}

\usepackage[page,title,titletoc,header]{appendix}
\usepackage{relsize}
\usepackage{stmaryrd}


\newcommand{\bB}{\mathbb{B}}

\newcommand{\acts}{\curvearrowright}
\newcommand{\symd}{\triangle}
\newcommand{\Stab}{\mathrm{Stab}}

\newcommand{\dom}{\mathrm{dom}}

\newcommand{\sH}{\mathrm{H}}

\newcommand{\pmp}{p{$.$}m{$.$}p{$.$}}
\renewcommand{\:}{\,:\,}
\newcommand{\res}{\restriction}

\begin{document}
\pagestyle{headings}
\title{Borel structurability on the 2-shift of a countable group}
\date{}
\author{Brandon Seward\footnote{email:b.m.seward@gmail.com}\\
University of Michigan\footnote{Department of Mathematics, University of Michigan, 530 Church Street, Ann Arbor, MI 48109, U.S.A.},\\
$\ $\\
Robin D. Tucker-Drob\footnote{email:rtuckerd@gmail.com}\\
Rutgers University\footnote{Department of Mathematics, Rutgers University - Hill Center for the Mathematical Sciences, 110 Frelinghuysen Rd., Piscataway, NJ 08854-8019, U.S.A.}{\let\thefootnote\relax\footnote{{\bf{Keywords:}} Bernoulli shift, Borel reducibility, Borel structurability, Borel combinatorics, factor map, entropy}}}
\begin{abstract}
We show that for any infinite countable group $G$ and for any free Borel action $G\cc X$ there exists an equivariant class-bijective Borel map from $X$ to the free part $\text{Free}(2^G)$ of the $2$-shift $G\cc 2^G$. This implies that any Borel structurability which holds for the equivalence relation generated by $G\cc \text{Free}(2^G)$ must hold \emph{a fortiori} for all equivalence relations coming from free Borel actions of $G$. A related consequence is that the Borel chromatic number of $\text{Free}(2^G)$ is the maximum among Borel chromatic numbers of free actions of $G$. This answers a question of Marks. Our construction is flexible and, using an appropriate notion of genericity, we are able to show that in fact the generic $G$-equivariant map to $2^G$ lands in the free part. As a corollary we obtain that for every $\epsilon >0$, every free {\pmp} action of $G$ has a free factor which admits a $2$-piece generating partition with Shannon entropy less than $\epsilon$. This generalizes a result of Danilenko and Park.
\end{abstract}
\maketitle
\setcounter{tocdepth}{3}
\tableofcontents
%

\section{Introduction}

Let $G$ be a countably infinite discrete group. For a Polish space $K$, we equip $K^G = \prod_{g \in G} K$ with the product topology and we let $G$ act on $K^G$ via the left shift action: $(g \cdot w)(h) = w(g^{-1} h)$ for $g, h \in G$ and $w \in K^G$. We call $K^G$ the {\bf $K$-shift}. For $W \subseteq K^G$ we write $\ol{W}$ for the closure of $W$. The {\bf free part of $K^G$}, denoted $\mathrm{Free}(K^G)$, is the set of points having trivial stabilizer:
$$\mathrm{Free}(K^G) = \{w \in K^G \: \forall g \in G \ g \neq 1_G \Longrightarrow g \cdot w \neq w\}.$$
We mention that, unless $|K| = 1$, the set $\mathrm{Free}(K^G)$ is not closed in $K^G$. We will work almost exclusively with the $2$-shift $2^G$, where we use the convention that $2 = \{0, 1\}$.

Let $G\cc X$ be a Borel action of $G$ on a standard Borel space $X$. Our starting point is the well-known bijective correspondence
\begin{align*}
\big\{ \text{Borel subsets of }X\big\} &\longleftrightarrow \big\{ G\text{-equivariant Borel maps from }X\text{ into }2^G \big\} ,
\end{align*}
which sends a Borel subset $A\subseteq X$ to the map $f_A : X\ra 2^G$ given by $f_A(x)(g)= 1_{g\cdot A}(x)$, and whose inverse sends a $G$-equivariant Borel map $f:X\ra 2^G$ to the set $A_f = \{ x\in X\csuchthat f(x)(1_G) = 1 \}$. Since the map $f_A$ encodes information not only about the set $A$, but also about each of its infinitely many translates $\{ g\cdot A \} _{g\in G}$, it is not surprising that properties of $f_A$ can depend very subtly on $A$. In this article, we provide a flexible construction, based on a construction of Gao, Jackson, and Seward \cite{GJS12}, of subsets $A\subseteq X$ that yield $G$-equivariant Borel maps into the free part $\mathrm{Free}(2^G)$ of $2^G$, under the assumption that the action $G\cc X$ is free. It is easy to see that freeness of $G\cc X$ is a necessary condition for the existence of such maps. Our main result moreover shows that, when the action $G\cc X$ is free, not only do such maps exist, but they are abundant.

In what follows, we call a subset $M\subseteq X$ {\bf syndetic} if $X= F\cdot M$ for some finite $F\subseteq G$. Also, if $\mu$ is a Borel probability measure on $X$, then recall that the measure algebra $\mathrm{MALG}_\mu$ is the collection of Borel subsets of $X$ modulo $\mu$-null sets. It is a Polish space under the metric $d([A]_\mu, [B]_\mu) = \mu(A \symd B)$, where $[A]_\mu$ denotes the equivalence class of $A$ in $\mathrm{MALG}_\mu$ and $\symd$ denotes symmetric difference.

\begin{theorem}\label{thm:main}
Let $G \acts X$ be a free Borel action of $G$ on a standard Borel space $X$. Then there exists a $G$-equivariant Borel map $f : X\ra 2^G$ with $\ol{f(X)}\subseteq \mathrm{Free}(2^G)$. Furthermore:
\begin{enumerate}
\item Suppose that $Y\subseteq X$ is a Borel set such that $X\setminus Y$ is syndetic, and $\phi : Y\ra 2$ is a Borel function. Then there exists a $G$-equivariant Borel map $f : X\ra 2^G$ with $\ol{f(X)}\subseteq \mathrm{Free}(2^G)$ and $f(y)(1_G) = \phi (y)$ for all $y\in Y$.

\item Let $Y$ and $\phi$ be as in part (1). Then there exists a family $\{ f_w \} _{w\in 2^\N}$ of maps each satisfying the conclusion of part (1), and with the further property that
    \[
    \ol{f_w(X)}\cap \ol{f_z(X)} = \emptyset
    \]
    for all distinct $w,z\in 2^\N$. In addition, the map $(w,x) \mapsto f_w(x)$ is Borel, and for each fixed $x\in X$ the map $w \mapsto f_w(x)$ is continuous.

\item For any $G$-quasi-invariant Borel probability measure $\mu$ on $X$, the set
    \[
    \{ [A]_\mu \csuchthat A\subseteq X\text{ is Borel and }f_A(X)\subseteq \mathrm{Free}(2^G)\}
    \]
    is dense $G_\delta$ in $\mathrm{MALG}_{\mu}$.
\end{enumerate}
\end{theorem}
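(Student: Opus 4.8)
The plan is to build the map $f$ explicitly from a carefully chosen Borel subset $A \subseteq X$, using a marker-and-coding scheme in the spirit of Gao--Jackson--Seward. First I would recall that for an infinite group $G$ one can find, for each $n$, a Borel complete section $M_n \subseteq X$ that is ``$n$-syndetic and $n$-separated'' — i.e. the translates $\{g \cdot M_n\}_{g \in B_n}$ cover $X$ while points of $M_n$ are pairwise at distance $> 2n$ in the Cayley metric on each orbit (this uses freeness together with a standard Borel marker argument, e.g. the Kechris--Solecki--Todorcevic construction or the $12/n$-marker lemma). Around each marker point one has room to inscribe a ``name'': a prescribed finite binary pattern supported on $B_n \cdot m$. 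By choosing a sequence of patterns that (i) are distinct enough to force triviality of stabilizers and (ii) are sparse enough to leave room, on the complement of the markers, to encode the prescribed values $\phi(y)$, one defines $A$ and hence $f = f_A$. Verifying $\ol{f(X)} \subseteq \mathrm{Free}(2^G)$ amounts to showing that every limit point $w$ of $f(X)$ still exhibits, in every finite window, a recognizable marker pattern that pins down the identity, so no nontrivial $g$ can fix $w$; this is the classical GJS hyper-aperiodicity argument and I would isolate it as a lemma about the pattern sequence.

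For part (1), the only change is that the coding must be done \emph{relative} to $Y$: since $X \setminus Y$ is syndetic, say $X = F \cdot (X\setminus Y)$, we are guaranteed a uniformly positive density of ``free'' coordinates outside $Y$ in every orbit, and I would arrange the markers $M_n$ to live inside $X \setminus Y$ and the aperiodicity patterns to be written only on coordinates in $X \setminus Y$, leaving $f(y)(1_G) = \phi(y)$ untouched for $y \in Y$. One must check the bookkeeping works: the finitely many translates needed to cover $X$ by $X \setminus Y$ bound how much room we lose, and for $n$ large the marker regions still fit. Part (2) then follows by a standard amalgamation: reserve one further ``control'' coordinate (or a sparse sequence of them, again inside $X\setminus Y$ and disjoint from the marker-coding region) on which to write, near each marker $m$, a prefix of $w \in 2^\N$ of length growing with the marker scale; distinct $w, z$ eventually disagree, so any $u \in \ol{f_w(X)}$ and $v \in \ol{f_z(X)}$ differ on arbitrarily large windows read off the marker structure, giving $\ol{f_w(X)} \cap \ol{f_z(X)} = \emptyset$. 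Borelness of $(w,x)\mapsto f_w(x)$ and continuity in $w$ are immediate from the construction, since only finitely many bits of $w$ influence any fixed coordinate of $f_w(x)$.

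For part (3), the strategy is to show the set $\mathcal{G} = \{[A]_\mu : f_A(X) \subseteq \mathrm{Free}(2^G)\}$ is dense $G_\delta$ in $\mathrm{MALG}_\mu$. For the $G_\delta$ part: $f_A(X) \subseteq \mathrm{Free}(2^G)$ iff for every $g \neq 1_G$ and every $x$, $g\cdot f_A(x) \neq f_A(x)$, i.e. $\exists h \in G$ with $1_{g h \cdot A}(x) \neq 1_{h\cdot A}(x)$; the set of $[A]_\mu$ for which this fails on a positive-measure set of $x$, for fixed $g$, is easily seen to be closed (or at least $F_\sigma$) in $\mathrm{MALG}_\mu$ once one works modulo null sets carefully, so $\mathcal{G}$ is a countable intersection of open sets — here one uses that since $\mu$ is only quasi-invariant, equality of sets mod $\mu$ is preserved under the action. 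For density: given any $[B]_\mu$ and $\epsilon > 0$, apply part (1) with $Y$ a syndetic-complement Borel set on which $B$ and the target set agree up to measure $\epsilon$ — concretely, choose $n$ large enough that the marker region $M_n$ has $\mu$-measure less than $\epsilon$ (possible since $M_n$ can be taken $B_n$-separated, hence of measure $O(1/|B_n|) \to 0$ by quasi-invariance and a Rokhlin-type estimate), set $Y = X \setminus (B_n \cdot M_n)$ and $\phi = 1_B \res Y$, and let $A = A_f$ for the resulting $f$; then $A \symd B \subseteq B_n\cdot M_n$ has measure $< \epsilon' $ for suitable $\epsilon'$. The main obstacle is the measure estimate: ensuring the marker/coding region has small $\mu$-measure while simultaneously being large enough (syndetic) to carry the aperiodicity coding — this is where quasi-invariance rather than invariance requires care, since Radon--Nikodym cocycles can distort densities, and one must invoke that for any $\epsilon$ there exist arbitrarily sparse Borel complete sections of measure $< \epsilon$, which for quasi-invariant $\mu$ follows from the fact that $\mu$ restricted to an orbit-equivalence-invariant conull set behaves like a $\sigma$-finite invariant measure up to the cocycle, or more directly from the marker lemma applied after passing to the (quasi-invariant) ergodic decomposition. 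I would package this density estimate as the key lemma and expect it to absorb most of the work.
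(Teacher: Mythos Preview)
Your overall architecture is in the same spirit as the paper---build $f$ from a GJS-style coding scheme, then deduce (3) from (1)---but you have the distribution of difficulty inverted, and this leads to a genuine gap.

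\textbf{Where the real work is.} You treat the marker-and-coding construction for (1) and (2) as essentially routine (``the classical GJS hyper-aperiodicity argument'') and flag the small-measure syndetic set needed for (3) as ``the key lemma'' that will ``absorb most of the work.'' In the paper it is exactly the opposite. The density in (3) is a two-line consequence of (1) once you observe that there exist \emph{infinitely many pairwise disjoint} syndetic Borel subsets of $X$ (an easy inductive splitting argument); since they are disjoint and $\mu$ is a probability measure, one of them has $\mu$-measure below any prescribed $\epsilon$, with no appeal to quasi-invariance, Rokhlin estimates, Radon--Nikodym cocycles, or ergodic decomposition. Your proposed route through $B_n$-separation and $O(1/|B_n|)$ estimates does not work for merely quasi-invariant $\mu$, and the workarounds you suggest are both unnecessary and harder than the problem.

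\textbf{The gap in (1)--(2).} The step you wave past---``choosing a sequence of patterns that are distinct enough to force triviality of stabilizers''---is precisely the hard part, and your sketch does not address it. The difficulty is that you are not writing patterns on a blank background: on $Y$ the values are already fixed by an \emph{arbitrary} Borel $\phi$, so any marker pattern you inscribe on the syndetic complement $X\setminus Y$ must be recognizable against unknown noise. A point $x$ near a marker and a point $x'$ far from all markers may, purely by accident, display the same local $\phi$-pattern. The paper resolves this not with a single hierarchy of markers but with an inductive scheme that handles one nonidentity $s\in G$ at a time: at each stage it extends the current partial function while manufacturing a syndetic set $\Delta$ that is provably \emph{recognizable} from the extended function (via a counting-of-ones trick, a verification/tie-breaking device, and a further coding of the displacement $b$ with $\delta=b\cdot d$), then uses a finite Borel coloring of a bounded-degree graph on $\Delta$ to ensure $\wt{\phi'}(x)\res T$ and $\wt{\phi'}(s\cdot x)\res T$ are incompatible for a fixed finite $T$. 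The ``reserve'' sets $R_n$ carved out along the way are themselves syndetic and recognizable, which is exactly what makes the family $\{f_w\}$ in (2) have pairwise disjoint closures. Your proposal to ``write a prefix of $w$ near each marker'' gets the disjoint-images conclusion only if those marker neighborhoods are uniformly locatable from the output alone; that is the recognizability issue again, and it is not free.

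In short: your plan for (3) should be replaced by the disjoint-syndetic-sets trick, and your plan for (1)--(2) needs the recognizability machinery (or an equivalent) spelled out before it becomes a proof.
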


In general the maps $f : X \rightarrow 2^G$ provided by the above theorem will not be injective. For example, if $G$ is amenable (or more generally sofic) and $G \acts X$ admits an invariant Borel probability measure $\mu$, then there cannot exist an equivariant injection into $2^G$ if the entropy of $G \acts (X, \mu)$ is greater than $\log(2)$. We mention, however, that a long standing open problem due to Weiss asks whether there is an equivariant injection $f : X \rightarrow k^G$ for some $k \in \N$ whenever $G \acts X$ does not admit any invariant Borel probability measure, see \cite[p. 324]{We89} and \cite[Problem 5.7]{JKL02}. Tserunyan \cite{Ts12} has shown that such an injection does exist whenever $G\cc X$ admits a $\sigma$-compact realization, although in general the problem remains open even in the case $G=\Z$.

Theorem \ref{thm:main} has a number of applications. For example, it implies that if the equivalence relation generated by $G \cc \mathrm{Free}(2^G)$ is treeable, then all equivalence relations induced by free Borel actions of $G$ are treeable. It also implies that $G \cc \mathrm{Free}(2^G)$ has maximal Borel chromatic number among all free Borel actions of $G$, and that every probability measure preserving action of $G$ has free factors which are arbitrarily small in the sense of Shannon entropy. We discuss these applications at length in \S\ref{sec:consequences} below. Then statements (1) and (2) of Theorem \ref{thm:main} are proved in \S\ref{sec:construction} via an inductive construction which is based on methods from \cite[Chapter 10]{GJS12}. Finally, statement (3) is deduced from (1) in \S\ref{sec:generic}.


\section{Consequences of Theorem 1.1}\label{sec:consequences}

\subsection{Borel structurability}
Let $E$ and $F$ be countable Borel equivalence relations on the standard Borel spaces $X$ and $Y$ respectively. A {\bf homomorphism} from $E$ to $F$ is a map $f:X\ra Y$ which takes $E$-equivalent points to $F$-equivalent points. Such a homomorphism is called {\bf class-bijective} if for each $x\in X$, the restriction of $f$ to the $E$-class $[x]_E$ is a bijection onto the $F$-class $[f(x)]_F$. A class-bijective homomorphism $f:X\ra Y$ from $E$ to $F$ may be viewed as a structurability reduction from $E$ to $F$; any structuring on the $F$-classes can be pulled back, via the map $f$, to obtain a structuring \emph{of the same isomorphism type} on the $E$-classes.

More precisely, let $L = ( R_i ) _{i\in I}$ be a countable relational language, where $R_i$ has arity $n_i$, and let $\mc{K}$ be a class of countable $L$-structures that is closed under isomorphism. The equivalence relation $E$ is said to be {\bf Borel $\mc{K}$-structurable} if there exists a collection $(Q_i)_{i\in I}$ of Borel sets with $Q_i \subseteq \{ (x_0,x_1,\dots , x_{n_i-1}) \in X^{n_i}\csuchthat x_0Ex_1\cdots Ex_{n_i-1} \}$ for each $i\in I$, such that for every $x\in X$, the $L$-structure $\langle [x]_E, (Q_i\res [x]_E )_{i\in I}\rangle$ is in $\mc{K}$. The collection $(Q_i )_{i \in I}$ is called a {\bf Borel $\mc{K}$-structuring} of $E$. For example, if $\mc{K}$ consists of the class of countable trees, then the Borel $\mc{K}$-structurable equivalence relations are precisely the treeable equivalence relations. The notion of Borel structurability was introduced in \cite[\S 2.5]{JKL02}. See \cite{Ma13b} and \cite{Ke14} for recent work in this area.

It is an easy exercise to see that Borel structurings can be pulled back through class-bijective homomorphisms, yielding the following.

\begin{proposition}\label{prop:Kstruct}
Suppose that there exists a class-bijective Borel homomorphism $f:X\ra Y$ from $E$ to $F$. If $F$ is Borel $\mc{K}$-structurable then so is $E$.
\end{proposition}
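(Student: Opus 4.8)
The plan is to unwind the definitions and directly transport a $\mc{K}$-structuring of $F$ along the class-bijective homomorphism $f$. Suppose $(S_i)_{i\in I}$ is a Borel $\mc{K}$-structuring of $F$, so that $S_i\subseteq\{(y_0,\dots,y_{n_i-1})\in Y^{n_i}\csuchthat y_0Fy_1\cdots Fy_{n_i-1}\}$ and $\langle [y]_F,(S_i\res [y]_F)_{i\in I}\rangle\in\mc{K}$ for every $y\in Y$. For each $i\in I$ define
\[
Q_i = \{(x_0,\dots,x_{n_i-1})\in X^{n_i}\csuchthat x_0Ex_1\cdots Ex_{n_i-1}\text{ and }(f(x_0),\dots,f(x_{n_i-1}))\in S_i\}.
\]
Since $f$ is Borel, each $Q_i$ is Borel, and by construction $Q_i$ is contained in the set of $n_i$-tuples of pairwise $E$-equivalent points. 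It remains to check that $\langle [x]_E,(Q_i\res[x]_E)_{i\in I}\rangle\in\mc{K}$ for every $x\in X$.

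First I would fix $x\in X$ and consider the restriction $f_x := f\res[x]_E$. Because $f$ is a homomorphism from $E$ to $F$ it maps $[x]_E$ into $[f(x)]_F$, and because $f$ is class-bijective $f_x$ is a bijection of $[x]_E$ onto $[f(x)]_F$. The key claim is that $f_x$ is an isomorphism of $L$-structures from $\langle[x]_E,(Q_i\res[x]_E)_{i\in I}\rangle$ onto $\langle[f(x)]_F,(S_i\res[f(x)]_F)_{i\in I}\rangle$. This is immediate from the definition of $Q_i$: for a tuple $(x_0,\dots,x_{n_i-1})$ of points in $[x]_E$, we have $(x_0,\dots,x_{n_i-1})\in Q_i$ if and only if $(f(x_0),\dots,f(x_{n_i-1}))\in S_i$, which is exactly the statement that $f_x$ preserves $R_i$ in both directions. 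Hence the two $L$-structures are isomorphic, and since $\langle[f(x)]_F,(S_i\res[f(x)]_F)_{i\in I}\rangle\in\mc{K}$ and $\mc{K}$ is closed under isomorphism, we conclude $\langle[x]_E,(Q_i\res[x]_E)_{i\in I}\rangle\in\mc{K}$. Therefore $(Q_i)_{i\in I}$ is a Borel $\mc{K}$-structuring of $E$, as desired.

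There is essentially no hard step here; the only things to be careful about are purely bookkeeping matters. One must check that the pulled-back relations $Q_i$ land in the prescribed domain (tuples of pairwise $E$-equivalent points), which uses that $f$ is a homomorphism; that $f_x$ being a bijection onto the full $F$-class (not just an injection into it) is what makes it an isomorphism of structures rather than merely an embedding, which is exactly where class-bijectivity is used; and that Borelness of $Q_i$ follows from Borelness of $f$ together with Borelness of the $S_i$ and the standard fact that $E$ and $F$, being countable Borel equivalence relations, have Borel graphs. If the author intends the stronger statement that $\mc{K}$-structurability is equivalent under a \emph{Borel isomorphism}, one would additionally run the same argument in the reverse direction, but for this one-directional proposition the above suffices.
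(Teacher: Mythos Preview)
Your proof is correct and is precisely the intended argument; the paper itself does not give a proof, simply remarking that ``It is an easy exercise to see that Borel structurings can be pulled back through class-bijective homomorphisms.'' Your write-up supplies exactly that exercise.
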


The following simple lemma, whose proof we omit, relates class-bijective Borel homomorphisms with Theorem \ref{thm:main}.

\begin{lemma} \label{lem:classbiject}
Let $G \cc X$ and $G \cc Y$ be Borel actions of $G$, let $E$ and $F$ be the induced orbit equivalence relations on $X$ and $Y$ respectively, and let $f: X \ra Y$ be a $G$-equivariant Borel map. Then $f$ is a homomorphism from $E$ to $F$, and if $G$ acts freely on both $X$ and $Y$ then $f$ is class-bijective.
\end{lemma}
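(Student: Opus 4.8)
The plan is simply to unwind the definitions of the orbit equivalence relations $E$ and $F$, the definition of a class-bijective homomorphism, and the hypothesis of $G$-equivariance. I expect no real obstacle here, and I would establish the two assertions of the lemma separately.

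For the first assertion, suppose $x$ and $x'$ are $E$-equivalent. By definition of the orbit equivalence relation there is some $g \in G$ with $x' = g\cdot x$, and then $G$-equivariance of $f$ gives $f(x') = f(g\cdot x) = g\cdot f(x)$. Thus $f(x)$ and $f(x')$ lie in a common $G$-orbit in $Y$, i.e.\ they are $F$-equivalent, so $f$ is a homomorphism from $E$ to $F$.

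For the second assertion, assume the actions on $X$ and $Y$ are free, fix $x \in X$, and show that the restriction of $f$ to $[x]_E$ is a bijection onto $[f(x)]_F$. \emph{Surjectivity}: an arbitrary point of $[f(x)]_F$ has the form $g\cdot f(x)$ for some $g \in G$, and by equivariance $g\cdot f(x) = f(g\cdot x)$ with $g\cdot x \in [x]_E$; hence $f$ maps $[x]_E$ onto $[f(x)]_F$. \emph{Injectivity}: suppose $x_1, x_2 \in [x]_E$ satisfy $f(x_1) = f(x_2)$, and write $x_2 = g\cdot x_1$ for some $g \in G$ (possible since $x_1$ and $x_2$ are $E$-equivalent). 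Then by equivariance $f(x_1) = f(x_2) = f(g\cdot x_1) = g\cdot f(x_1)$, so $g$ stabilizes the point $f(x_1) \in Y$; since $G$ acts freely on $Y$ this forces $g = 1_G$, whence $x_2 = x_1$.

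The only point worth a second look — and it is not really an obstacle — is to locate precisely where freeness is used: it enters exactly once, through the free action on $Y$, in order to conclude $g = 1_G$ from $g\cdot f(x_1) = f(x_1)$. In fact freeness of the action on $X$ plays no role in the argument for class-bijectivity, but it costs nothing to include it in the hypothesis and it is the relevant case for the applications in \S\ref{sec:consequences}. I would add a one-line remark to this effect and leave the verification at that.
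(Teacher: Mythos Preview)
Your argument is correct; the paper itself omits the proof entirely, calling it a ``simple lemma, whose proof we omit,'' so your direct verification from the definitions is exactly what was intended. Your observation that only freeness on $Y$ is needed for injectivity is accurate and a nice remark.
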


Theorem \ref{thm:main}, Lemma \ref{lem:classbiject}, and Proposition \ref{prop:Kstruct} therefore imply that out of all equivalence relations coming from free actions of $G$, the equivalence relation $F(G,2)$, generated by $G\cc \mathrm{Free}(2^G)$, is the most difficult to structure in a Borel way.

\begin{corollary}\label{cor:structure}
Let $\mc{K}$ be a class of countable $L$-structures which is closed under isomorphism. Suppose that $F(G,2)$ is Borel $\mc{K}$-structurable. Then every equivalence relation generated by a free Borel action of $G$ is Borel $\mc{K}$-structurable.
\end{corollary}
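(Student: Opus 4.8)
The plan is simply to assemble the three ingredients already at hand. First I would fix an arbitrary free Borel action $G \acts X$ and let $E$ be the orbit equivalence relation it generates. Applying Theorem~\ref{thm:main} to this action yields a $G$-equivariant Borel map $f : X \to 2^G$ with $\ol{f(X)} \subseteq \mathrm{Free}(2^G)$; in particular $f(X) \subseteq \mathrm{Free}(2^G)$, so $f$ is a $G$-equivariant Borel map from $X$ into $\mathrm{Free}(2^G)$. I would note that $\mathrm{Free}(2^G)$ is a $G$-invariant Borel subset of the standard Borel space $2^G$ on which $G$ acts freely, by the very definition of the free part, so it is itself a standard Borel $G$-space and $F(G,2)$, the orbit equivalence relation of $G \acts \mathrm{Free}(2^G)$, is a countable Borel equivalence relation to which the earlier results apply.

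Next I would invoke Lemma~\ref{lem:classbiject}, taking the two spaces there to be $X$ and $\mathrm{Free}(2^G)$ and the map there to be $f$: since $G$ acts freely on both $X$ and $\mathrm{Free}(2^G)$, the map $f$ is a class-bijective Borel homomorphism from $E$ to $F(G,2)$. Finally, under the hypothesis that $F(G,2)$ is Borel $\mc{K}$-structurable, Proposition~\ref{prop:Kstruct} lets us pull a Borel $\mc{K}$-structuring of $F(G,2)$ back along $f$ to obtain a Borel $\mc{K}$-structuring of $E$. As $E$ was an arbitrary equivalence relation generated by a free Borel action of $G$, this establishes the corollary.

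There is essentially no serious obstacle: all the real content is contained in Theorem~\ref{thm:main}, whose conclusion that the image (indeed, the closure of the image) lands in $\mathrm{Free}(2^G)$ is precisely what is needed to apply Lemma~\ref{lem:classbiject}, since the class-bijectivity conclusion there requires the target action to be free. The only minor points to verify are the routine facts recorded above — that $\mathrm{Free}(2^G)$ is a standard Borel $G$-space whose orbit equivalence relation is countable Borel, and that the structuring transported in Proposition~\ref{prop:Kstruct} is Borel — both of which are immediate from the definitions.
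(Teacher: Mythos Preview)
Your proposal is correct and follows exactly the approach indicated in the paper, which simply states that the corollary is an immediate consequence of Theorem~\ref{thm:main}, Lemma~\ref{lem:classbiject}, and Proposition~\ref{prop:Kstruct}. You have merely spelled out the details of combining these three ingredients.
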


This should be contrasted with Thomas's result \cite[Corollary 6.3]{Th12} that there are countable groups $G$, e.g., $G=SL_3(\Z )$, for which $F(G,2) <_B F(G,3)<_B \cdots <_B F(G,\N )$. Here $F(G,K )$ denotes the equivalence relation generated by $G\cc \mathrm{Free}(K^G )$, and $<_B$ denotes strict Borel reducibility. So, while Corollary \ref{cor:structure} shows that from the point of view of Borel structurability, $F(SL_3(\Z ),2)$ is the most complicated equivalence relation generated by a free action of $SL_3(\Z )$, Thomas's result shows that from the point of view of Borel reducibility this is not the case.

In \cite{Th09}, Thomas shows that Martin's conjecture implies that the Borel complexity of any weakly universal countable Borel equivalence relation must concentrate off of a conull set with respect to any Borel probability measure. In \cite{Ma13b}, Marks shows that the Borel complexity of any universal $\mc{K}$-structurable countable Borel equivalence relation is achieved on a null set with respect to any Borel probability measure. Along these lines, Theorem \ref{thm:main}.(2) implies that for any countable group $G$, the Borel-structurability complexity of $F(G,2)$ is achieved on a null set with respect to any Borel probability measure. In fact, rather than using the ideal of null sets of a Borel probability measure, we can obtain the same conclusion for a much wider class of ideals. For example, a sufficient condition on the ideal $I$ of $\mathrm{Free}(2^G)$ would be that every uncountable collection $C$ of pairwise-disjoint Borel subsets of $X$ satisfies $C\cap I \neq \varnothing$. The ideal of null sets for any Borel probability measure has this property, as does the ideal of meager sets for any compatible Polish topology on $\mathrm{Free}(2^G)$. Below we state yet a weaker requirement on the ideal.

In what follows, for a Polish space $Z$ we let $K(Z)$ denote the Polish space of all compact subsets of $Z$.

\begin{corollary}
Let $I$ be an ideal on $\mathrm{Free}(2^G)$. Assume that every nonempty perfect set $P\subseteq K(\mathrm{Free}(2^G))$ of pairwise disjoint $G$-invariant compact subsets of $\mathrm{Free}(2^G)$ satisfies $P \cap I \neq \varnothing$. Then there exists a compact $G$-invariant set $K\subseteq \mathrm{Free}(2^G)$ with $K\in I$ such that for any free Borel action $G\cc X$, there exists a $G$-equivariant class-bijective Borel map $f:X\ra K$.
\end{corollary}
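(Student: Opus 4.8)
The plan is to apply Theorem~\ref{thm:main}.(2) to the free Borel action $G \cc \mathrm{Free}(2^G)$ itself (with, say, $Y = \varnothing$), obtaining a family $\{f_w\}_{w \in 2^\N}$ of $G$-equivariant Borel maps $f_w : \mathrm{Free}(2^G) \to 2^G$ such that, writing $K_w = \ol{f_w(\mathrm{Free}(2^G))}$, we have $K_w \subseteq \mathrm{Free}(2^G)$ for every $w$, the sets $K_w$ are pairwise disjoint, the map $(w,x) \mapsto f_w(x)$ is Borel, and $w \mapsto f_w(x)$ is continuous for each fixed $x$. Since $2^G$ is compact, each $K_w$ is a compact subset of $\mathrm{Free}(2^G)$; it is $G$-invariant because $f_w$ is equivariant and the shift acts by homeomorphisms; and it is nonempty since $\mathrm{Free}(2^G)$ is nonempty. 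Hence $w \mapsto K_w$ is an injection from $2^\N$ into the Polish space $K(\mathrm{Free}(2^G))$ --- which is Polish because $\mathrm{Free}(2^G)$ is a $G_\delta$ subset of $2^G$, so that $K(\mathrm{Free}(2^G))$ is $G_\delta$ in $K(2^G)$. The goal is to show that the range $\mc{C} = \{K_w : w \in 2^\N\}$ contains a nonempty perfect set; the hypothesis on $I$ will then produce a member of $\mc{C}$ lying in $I$, which we take to be $K$.

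The crucial step is the Borel measurability of $w \mapsto K_w$. I would first observe that this map is lower semicontinuous: for open $U \subseteq 2^G$, if $K_{w_0} \cap U \neq \varnothing$ then $f_{w_0}(x) \in U$ for some $x \in \mathrm{Free}(2^G)$ (as $U$ is open and $K_{w_0} = \ol{f_{w_0}(\mathrm{Free}(2^G))}$), and continuity of $w \mapsto f_w(x)$ gives a neighborhood of $w_0$ on which $f_w(x) \in U$, whence $K_w \cap U \neq \varnothing$; so $\{w : K_w \cap U \neq \varnothing\}$ is open. Using that $2^G$ is compact --- so that a closed $F \subseteq 2^G$ is a decreasing intersection $\bigcap_n U_n$ of open sets with $\{C : C \cap F \neq \varnothing\} = \bigcap_n \{C : C \cap U_n \neq \varnothing\}$ --- it follows that the preimage under $w \mapsto K_w$ of every subbasic open subset of $K(2^G)$ (of the form $\{C : C \cap U \neq \varnothing\}$ or $\{C : C \subseteq U\}$ with $U$ open) is Borel. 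Thus $w \mapsto K_w$ is Borel, and, being injective, it has a Borel range $\mc{C}$ by the Lusin--Suslin theorem. Since $\mc{C}$ is uncountable, the perfect set property for Borel sets yields a nonempty perfect set $P \subseteq \mc{C}$.

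To finish: $P$ is a nonempty perfect subset of $K(\mathrm{Free}(2^G))$ all of whose members are pairwise disjoint $G$-invariant compact subsets of $\mathrm{Free}(2^G)$, so by the hypothesis on $I$ we have $P \cap I \neq \varnothing$; fix $K \in P \cap I$ and the unique $w$ with $K = K_w$. Then $K$ is compact, $G$-invariant, contained in $\mathrm{Free}(2^G)$, and lies in $I$. The map $f_w : \mathrm{Free}(2^G) \to K$ is $G$-equivariant and Borel (its image lies in $K$), and since $G$ acts freely on $\mathrm{Free}(2^G)$ and on the invariant set $K$, Lemma~\ref{lem:classbiject} makes it class-bijective. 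Finally, for an arbitrary free Borel action $G \cc X$, Theorem~\ref{thm:main} supplies a $G$-equivariant Borel map $g : X \to \mathrm{Free}(2^G)$ with $\ol{g(X)} \subseteq \mathrm{Free}(2^G)$, class-bijective by Lemma~\ref{lem:classbiject}; then $f_w \circ g : X \to K$, a composite of class-bijective homomorphisms, is the desired $G$-equivariant class-bijective Borel map.

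The main obstacle is the Borel measurability of $w \mapsto K_w$, which is exactly where the continuity clause of Theorem~\ref{thm:main}.(2) is used, together with the compactness of $2^G$. A minor point is that reading ``perfect'' in the subspace $K(\mathrm{Free}(2^G))$ rather than in $K(2^G)$ is harmless, since $P \subseteq \mc{C} \subseteq K(\mathrm{Free}(2^G))$ and the two subspace topologies on $P$ coincide. The remaining ingredients --- compactness, $G$-invariance, nonemptiness and disjointness of the $K_w$, the Lusin--Suslin theorem, and the perfect set property --- are routine.
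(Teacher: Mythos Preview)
Your proof is correct and follows essentially the same route as the paper: apply Theorem~\ref{thm:main}.(2) to $G\cc\mathrm{Free}(2^G)$, argue that $w\mapsto K_w$ is Borel, extract a perfect set inside the range, intersect with $I$, and compose with a map from an arbitrary free $G\cc X$. The only minor differences are that the paper simply asserts Borelness of $w\mapsto K_w$ (where you supply the lower-semicontinuity argument) and then uses the perfect set theorem for analytic sets on the image, whereas you invoke Lusin--Suslin to upgrade to a Borel image first; both variants are fine.
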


\begin{proof}
By Theorem \ref{thm:main}.(2) there exists a family $\{ f_w \} _{w\in 2^\N}$ of $G$-equivariant class-bijective Borel maps $f_w:\mathrm{Free}(2^G) \ra 2^G$ with $\ol{f_w(\mathrm{Free}(2^G))}\subseteq \mathrm{Free}(2^G)$ and
$$\ol{f_w(\mathrm{Free}(2^G))} \cap \ol{f_z(\mathrm{Free}(2^G))} = \emptyset$$
for all distinct $w,z\in 2^\N$. Moreover, for each fixed $y \in \mathrm{Free}(2^G)$, the map $w\mapsto f_w(y)$ from $2^\N$ to $2^G$ is continuous. It follows that the map $2^\N \ra K(2^G)$ given by
$$w \mapsto \ol{f_w(\mathrm{Free}(2^G))}$$
is Borel. Therefore
$$\Big\{ \ol{f_w(\mathrm{Free}(2^G))} \Big\}_{w\in 2^\N }$$
is an uncountable analytic subset of $K(2^G )$, so there is a nonempty perfect subset $P\subseteq \big\{ \ol{f_w(\mathrm{Free}(2^G))} \big\} _{w\in 2^\N }$. Since $P\subseteq K(\mathrm{Free}(2^G))$ and since elements of $P$ are $G$-invariant and pairwise disjoint, we must have $P\cap I \neq \varnothing$. This shows that there is some $w_0\in 2^\N$ with
$$\ol{f_{w_0}(\mathrm{Free}(2^G))}\in I.$$
Let $K= \ol{f_{w_0}(\mathrm{Free}(2^G))}$. Then $K\in I$ and if $G\cc X$ is any free Borel action of $G$ then by Theorem \ref{thm:main} there exists a $G$-equivariant class-bijective Borel map $f:X\ra \mathrm{Free}(2^G)$, whence $f_{w_0}\circ f : X\ra K$ is a $G$-equivariant class-bijective Borel map to $K$.
\end{proof}

\subsection{Borel chromatic number}
By a {\bf graph} on a set $X$ we mean a symmetric irreflexive subset $\mc{G}$ of $X\times X$. Let $K$ be any set. Then a {\bf $K$-coloring} of $\mc{G}$ is a map $\kappa : X \ra K$ such that $\kappa (x)\neq \kappa (y)$ whenever $(x,y)\in \mc{G}$. Let $X$ be a standard Borel space and let $\mc{G}$ be a Borel graph on $X$, i.e., $\mc{G}$ is Borel as a subset of $X\times X$. The {\bf Borel chromatic number} of $\mc{G}$, denoted $\chi _B (\mc{G})$ is defined to be the minimum cardinality of a standard Borel space $K$ such that there exists a Borel $K$-coloring $\kappa :X\ra K$ of $\mc{G}$.

Let $G$ be a countable group and fix a subset $S$ of $G$. To each free Borel action $G\cc X$ of $G$ we associate the Borel graph
\[
\mc{G}_{X} = \{ (x,s\cdot x)\csuchthat x\in X, \ s\in S\cup S^{-1},\ s\neq 1_G  \} .
\]

\begin{corollary}\label{cor:color}
Let $G\cc X$ be a free Borel action of $G$ on a standard Borel space $X$. Then $\chi _B(\mc{G}_{X})\leq \chi _B (\mc{G}_{\mathrm{Free}(2^G)})$.
\end{corollary}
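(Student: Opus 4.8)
The plan is to transport an optimal Borel coloring of $\mathcal{G}_{\mathrm{Free}(2^G)}$ back to $\mathcal{G}_X$ along an equivariant Borel map furnished by Theorem \ref{thm:main}. First, apply Theorem \ref{thm:main} to the free Borel action $G \acts X$ to obtain a $G$-equivariant Borel map $f : X \ra 2^G$ with $\ol{f(X)} \subseteq \mathrm{Free}(2^G)$; in particular $f(X) \subseteq \mathrm{Free}(2^G)$, so $f$ may be regarded as a $G$-equivariant Borel map from $X$ into $\mathrm{Free}(2^G)$. Since $G$ acts freely on $X$ and on $\mathrm{Free}(2^G)$, Lemma \ref{lem:classbiject} shows $f$ is a class-bijective homomorphism of the corresponding orbit equivalence relations; concretely, for every $x \in X$ and every $s \in G$ with $s \neq 1_G$ we have $f(s \cdot x) = s \cdot f(x) \neq f(x)$, using equivariance for the equality and freeness of $G \acts \mathrm{Free}(2^G)$ for the inequality.

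Next, observe that $f$ is a homomorphism of graphs from $\mathcal{G}_X$ to $\mathcal{G}_{\mathrm{Free}(2^G)}$: if $(x, s \cdot x) \in \mathcal{G}_X$ with $s \in S \cup S^{-1}$ and $s \neq 1_G$, then $(f(x), s \cdot f(x)) = (f(x), f(s \cdot x))$ lies in $\mathcal{G}_{\mathrm{Free}(2^G)}$, and its two endpoints are distinct by the previous step. Now fix a standard Borel space $K$ with $|K| = \chi_B(\mathcal{G}_{\mathrm{Free}(2^G)})$ together with a Borel $K$-coloring $\kappa : \mathrm{Free}(2^G) \ra K$ of $\mathcal{G}_{\mathrm{Free}(2^G)}$. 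Then $\kappa \circ f : X \ra K$ is Borel, and whenever $(x, y) \in \mathcal{G}_X$ we have $(f(x), f(y)) \in \mathcal{G}_{\mathrm{Free}(2^G)}$ with $f(x) \neq f(y)$, hence $\kappa(f(x)) \neq \kappa(f(y))$. Thus $\kappa \circ f$ is a Borel $K$-coloring of $\mathcal{G}_X$, giving $\chi_B(\mathcal{G}_X) \leq |K| = \chi_B(\mathcal{G}_{\mathrm{Free}(2^G)})$.

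I do not anticipate any substantial obstacle: granting Theorem \ref{thm:main}, this is a routine pullback argument, and the only point requiring a moment's care is the verification that $f$ carries edges of $\mathcal{G}_X$ to genuine edges — rather than to loops — of $\mathcal{G}_{\mathrm{Free}(2^G)}$, which is exactly where freeness of the target action is used.
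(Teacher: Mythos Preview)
Your proof is correct and follows the same approach as the paper's own proof: apply Theorem~\ref{thm:main} to obtain a $G$-equivariant Borel map $f:X\to\mathrm{Free}(2^G)$, and pull back an optimal Borel coloring along $f$. The paper states this in two sentences without spelling out the graph-homomorphism verification, whereas you make explicit why edges of $\mathcal{G}_X$ map to genuine edges of $\mathcal{G}_{\mathrm{Free}(2^G)}$; this is a fine elaboration but not a different argument.
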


\begin{proof}
By Theorem \ref{thm:main} there exists a Borel $G$-equivariant map $f:X\ra \mathrm{Free}(2^G)$. Then any Borel $K$-coloring of $\mc{G}_{\mathrm{Free}(2^G)}$ pulls back, via $f$, to a Borel $K$-coloring of $\mc{G}_{X}$.
\end{proof}

This answers a question of Marks \cite[Question 3.10]{Ma13a}. By combining Corollary \ref{cor:color} with \cite[Theorem 1.2]{Ma13a} we conclude that for the free group $\F _n$ of rank $n$, with free generating set $S= \{ s_0,\dots , s_{n-1} \}$, we have $\chi _B (\mc{G}_{\mathrm{Free}(2^{\F _n})}) = 2n + 1$.

\subsection{Free factors and Shannon entropy}

Let $G\cc X$ be a Borel action of $G$. A {\bf generating partition for $G\cc X$} is a countable Borel partition $\mc{P}$ of $X$ such that the smallest $G$-invariant $\sigma$-algebra containing $\mc{P}$ is the entire Borel $\sigma$-algebra. Equivalently, $\mc{P}$ is generating if for every $x \neq y \in X$ there is $g \in G$ such that $\mc{P}$ separates $g \cdot x$ and $g \cdot y$. Let $\mu$ be a Borel probability measure on $X$. We say that $\mc{P}$ is a {\bf generating partition for $G\cc (X,\mu )$} if it is a generating partition for $G\cc X_0$ for some $G$-invariant conull $X_0\subseteq X$.  The {\bf Shannon entropy} of a countable partition $\mc{P}$ is given by
\[
H_\mu (\mc{P}) = -\sum _{P\in \mc{P}}\mu (P)\log (\mu (P)) .
\]

\begin{corollary} \label{cor:shannon}
Let $G\cc (X,\mu )$ be a free probability measure preserving action of $G$. Then for any $\epsilon >0$ there exists a factor map $f:(X,\mu ) \ra (Z,\eta )$ onto a free action $G\cc (Z,\eta )$ which admits a 2-piece generating partition $\{ C_0, C_1 \}$ with $H_\eta (\{ C_0, C_1 \} ) < \epsilon$.
\end{corollary}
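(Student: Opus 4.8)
The plan is to realize the required factor as a shift system $(2^{G},\eta)$, where $\eta$ is the pushforward of $\mu$ along an equivariant Borel map into the free part, and then to take as the generating partition the canonical two-piece coordinate partition of $2^{G}$, whose Shannon entropy can be made small by choosing the map so that the relevant coordinate is rarely equal to $1$.

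Concretely, let $h(t) = -t\log t - (1-t)\log(1-t)$ be the binary entropy function; since $h$ is continuous with $h(0)=0$, fix $\delta \in (0,\tfrac12]$ with $h(\delta) < \epsilon$. Because the map $[A]_\mu \mapsto \mu(A) = d([A]_\mu,[\varnothing]_\mu)$ is continuous on $\mathrm{MALG}_\mu$, the set $U = \{ [A]_\mu \csuchthat \mu(A) < \delta \}$ is open and nonempty. A $G$-invariant measure is in particular $G$-quasi-invariant, so Theorem~\ref{thm:main}.(3) applies and tells us that $\{ [A]_\mu \csuchthat A\subseteq X \text{ Borel and } f_A(X)\subseteq \mathrm{Free}(2^G) \}$ is dense in $\mathrm{MALG}_\mu$; in particular it meets $U$. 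Fix a Borel $A\subseteq X$ with $\mu(A) < \delta$ and $f_A(X)\subseteq \mathrm{Free}(2^G)$.

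Now set $\eta = (f_A)_*\mu$. As $f_A$ is $G$-equivariant and $\mu$ is $G$-invariant, $\eta$ is a $G$-invariant Borel probability measure on $2^G$, and since $f_A(X)\subseteq \mathrm{Free}(2^G)$ it is concentrated on $Z := \mathrm{Free}(2^G)$, which is a standard Borel space on which $G$ acts freely. Thus $G\cc (Z,\eta)$ is a free \pmp\ action and $f_A\colon (X,\mu)\to (Z,\eta)$ is a factor map. Put $C_i = \{ w\in Z \csuchthat w(1_G) = i\}$ for $i\in\{0,1\}$. The translates $g\cdot C_1 = \{ w\csuchthat w(g)=1\}$ ($g\in G$) generate the Borel $\sigma$-algebra of $Z$, so $\{C_0,C_1\}$ is a generating partition for $G\cc Z$: if $w\neq w'$ in $Z$, pick $g$ with $w(g)\neq w'(g)$, and then $\{C_0,C_1\}$ separates $g^{-1}\cdot w$ from $g^{-1}\cdot w'$. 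Finally $\eta(C_1) = \mu(f_A^{-1}(C_1)) = \mu(\{x\csuchthat f_A(x)(1_G)=1\}) = \mu(A) < \delta \le \tfrac12$, so $H_\eta(\{C_0,C_1\}) = h(\eta(C_1)) = h(\mu(A)) \le h(\delta) < \epsilon$.

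All of the real work here is carried by Theorem~\ref{thm:main}; the deduction itself has essentially no obstacle, but there are two points worth isolating. First, one must pass to the shift factor $(2^G,\eta)$ rather than search for a two-piece generating partition of $(X,\mu)$ itself — the latter generally does not exist, e.g.\ when $G$ is amenable and $G\cc(X,\mu)$ has entropy at least $\log 2$. Second, mere existence of an equivariant map into $\mathrm{Free}(2^G)$ is not enough, since the associated set $A$ could have measure near $\tfrac12$; it is the genericity in Theorem~\ref{thm:main}.(3) that additionally forces $\mu(A)$, hence $H_\eta(\{C_0,C_1\})$, below $\epsilon$. (One could instead invoke Theorem~\ref{thm:main}.(1) with $\phi\equiv 0$ on a co-syndetic Borel set $Y$ of measure close to $1$, but this would require separately producing Borel syndetic sets of arbitrarily small measure, which part~(3) lets us avoid.)
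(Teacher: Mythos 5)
Your proof is correct and follows essentially the same route as the paper's: choose a threshold on $\mu(A)$ making the binary entropy small, use continuity of $[A]_\mu \mapsto \mu(A)$ together with the density in Theorem~\ref{thm:main}.(3) to find such an $A$ with $f_A(X)\subseteq \mathrm{Free}(2^G)$, and push forward to the shift with the canonical two-piece partition. Your remarks isolating why part~(3) (rather than mere existence of an equivariant map) is needed, and noting the alternative via part~(1), are accurate but not part of the paper's argument.
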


In \cite{DP02}, Danilenko and Park proved this for amenable groups by using the Ornstein--Weiss quasi-tiling machinery \cite{OW}. They also obtained a similar result for torsion-free groups but with a countably infinite partition.

\begin{proof}
Since $H_\mu (\{ A,X\setminus A \} ) \ra 0$ as $\mu (A) \ra 0$, there exists an $r>0$ such that $\mu (A) <r \ \Ra \ H_\mu (\{ A,X\setminus A \} )<\epsilon$. Since the map $[A]_\mu \mapsto \mu(A)$ is a continuous function from $\mathrm{MALG}_\mu$ to $\R$, it follows from Theorem \ref{thm:main}.(3) that there is a Borel set $A \subseteq X$ with $\mu(A) < r$ such that the induced map $f_A : X \rightarrow 2^G$ has image $f_A(X) \subseteq \mathrm{Free}(2^G)$. Take $(Z, \eta) = (\mathrm{Free}(2^G), f_A(\mu))$, and let $\{C_0, C_1\}$ be the canonical generating partition of $2^G$, i.e. $C_i = \{w \in 2^G \csuchthat w(1_G) = i\}$ for $i \in \{0, 1\}$. Then $A = f_A^{-1}(C_1)$, whence $\eta(C_1) = \mu(A) < r$ and $\sH_\eta(\{C_0, C_1\}) < \epsilon$.
\end{proof}

\subsection{Rohklin's generator theorem}

In \cite{R67}, Rohklin proved that if $\Z \cc (X, \mu)$ is a probability measure preserving ergodic free action then its Kolmogorov--Sinai entropy, denoted $h_\Z(X, \mu)$, can be computed from the Shannon entropy of generating partitions by the formula
$$h_\Z(X, \mu) = \inf \Big\{ \sH_\mu(\alpha) \: \alpha \text{ is a countable generating partition for } \Z \cc (X, \mu) \Big\}.$$
Although much of the entropy theory of $\Z$-actions has been generalized to actions of countable amenable groups, such an extension of Rohklin's theorem has not appeared in the literature. This may be due to the fact that Rohklin's theorem is quite similar to, and appeared just prior to, the much more famous Krieger finite generator theorem \cite{Kr70}. Using Corollary \ref{cor:shannon}, we are able to provide a short proof of a generalized version of Rohklin's theorem (one could also obtain this generalization by using the methods in \cite{DP02}). While this result will not be surprising to experts on entropy theory, we believe that it is important to record it in the literature.

\begin{cor}[Rohklin's generator theorem]
Let $G$ be a countably infinite amenable group, and let $G \cc (X, \mu)$ be a probability measure preserving ergodic free action. Then the Kolmogorov--Sinai entropy of this action satisfies
$$h_G(X, \mu) = \inf \Big\{ \sH_\mu(\alpha) \: \alpha \text{ is a countable generating partition for } G \cc (X, \mu) \Big\}.$$
\end{cor}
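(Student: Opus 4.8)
The plan is to prove the two inequalities separately. The inequality $h_G(X,\mu) \leq \inf\{\sH_\mu(\alpha)\}$ over countable generating partitions $\alpha$ is the easy direction and should follow from standard entropy theory for amenable groups: if $\alpha$ is any generating partition for $G \cc (X,\mu)$, then by the Kolmogorov--Sinai theorem for amenable group actions the entropy $h_G(X,\mu)$ equals $h_G(X,\mu,\alpha)$, the entropy of the process generated by $\alpha$, and $h_G(X,\mu,\alpha) \leq \sH_\mu(\alpha)$ always holds by subadditivity of Shannon entropy along a Følner sequence. Taking the infimum over all generating $\alpha$ gives the bound. (One should be slightly careful that $\alpha$ may be countably infinite, but the inequality $h_G(X,\mu,\alpha)\leq \sH_\mu(\alpha)$ is insensitive to this as long as $\sH_\mu(\alpha)<\infty$; if $\sH_\mu(\alpha)=\infty$ the inequality is trivial.)

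For the reverse inequality $h_G(X,\mu) \geq \inf\{\sH_\mu(\alpha)\}$, the idea is to combine Corollary \ref{cor:shannon} with the Abramov--Rokhlin-type addition formula for entropy under factor maps, i.e.\ the fact that for amenable $G$ and a factor map $\pi : (X,\mu) \to (Z,\eta)$ one has $h_G(X,\mu) = h_G(Z,\eta) + h_G(X,\mu \mid Z,\eta)$, where the second term is the relative (fiber) entropy. Fix $\epsilon > 0$. By Corollary \ref{cor:shannon} there is a factor map $f : (X,\mu) \to (Z,\eta)$ onto a free action admitting a $2$-piece generating partition $\{C_0,C_1\}$ with $\sH_\eta(\{C_0,C_1\}) < \epsilon$; in particular $h_G(Z,\eta) \leq \sH_\eta(\{C_0,C_1\}) < \epsilon$. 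Now let $\beta$ be \emph{any} countable generating partition for $G \cc (X,\mu)$ with finite Shannon entropy — one exists because, $G$ being amenable acting freely with an invariant measure, the action has a countable generating partition of finite entropy (indeed $h_G(X,\mu)<\infty$ forces, via the Krieger-type finite-generator theorems in the amenable setting, or simply by general structure theory, the existence of such a $\beta$; alternatively one can take $\beta$ realizing the infimum up to a small error directly). Refine: the partition $\alpha := \beta \vee f^{-1}(\{C_0,C_1\})$ is still generating (it refines $\beta$), and
\[
\sH_\mu(\alpha) \leq \sH_\mu(\beta) + \sH_\mu(f^{-1}(\{C_0,C_1\})) = \sH_\mu(\beta) + \sH_\eta(\{C_0,C_1\}).
\]
This is not yet quite what we want; instead I would argue as follows. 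Since $f^{-1}$ of the Borel $\sigma$-algebra of $Z$ is a $G$-invariant sub-$\sigma$-algebra generated (mod $\mu$) by the finite partition $\gamma := f^{-1}(\{C_0,C_1\})$, choose a countable partition $\delta$ of $X$ which, together with $\gamma$, generates the full $\sigma$-algebra, and which is "relatively generating" for the extension $X \to Z$; then $\alpha := \gamma \vee \delta$ is generating for $G \cc (X,\mu)$ and, using the addition formula together with the Rokhlin-style computation of relative entropy as an infimum of relative Shannon entropies of relatively generating partitions (this relativized Rokhlin formula is the genuinely new ingredient, but for amenable $G$ it is standard), one can arrange
\[
\sH_\mu(\alpha) \leq \sH_\eta(\{C_0,C_1\}) + h_G(X,\mu \mid Z,\eta) + \epsilon \leq h_G(X,\mu) + 2\epsilon .
\]
Letting $\epsilon \to 0$ gives $\inf\{\sH_\mu(\alpha)\} \leq h_G(X,\mu)$, completing the proof.

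The main obstacle is the second inequality, and specifically the need for a \emph{relative} version of Rokhlin's formula: that the relative (conditional) entropy $h_G(X,\mu \mid Z,\eta)$ of a factor map can be computed as the infimum of $\sH_\mu(\alpha \mid f^{-1}\mathcal{B}_Z)$ over partitions $\alpha$ that are generating relative to the factor. For $G = \Z$ this is classical (Abramov--Rokhlin), and for amenable $G$ it follows from the now-standard amenable entropy machinery (Følner sequences, the conditional Shannon--McMillan--Breiman theorem, and the amenable version of the Kolmogorov--Sinai theorem); I would cite the amenable entropy literature for it rather than reprove it, consistent with the remark in the text that the result "will not be surprising to experts." A cleaner route that avoids relativization altogether: apply Corollary \ref{cor:shannon} not to $(X,\mu)$ directly but observe that for the purposes of the infimum it suffices to find, for each $\epsilon$, a generating partition $\alpha$ with $\sH_\mu(\alpha) < h_G(X,\mu) + \epsilon$; since $h_G(X,\mu) = h_G(X,\mu,\beta)$ for any generating $\beta$, and since one can add to any generating $\beta$ a further "small" generating layer coming from a low-entropy free factor as in Corollary \ref{cor:shannon} while controlling the total Shannon entropy via the addition formula, the bound follows. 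Either way, the one nonroutine point is invoking the amenable addition/relativized-Rokhlin formula; everything else is bookkeeping with Følner averages and the already-established Corollary \ref{cor:shannon}.
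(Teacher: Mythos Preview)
Your easy direction is fine and matches the paper's. The hard direction, however, has a circularity problem. You ultimately rest the argument on a ``relative Rokhlin formula'' for amenable $G$: that $h_G(X,\mu \mid Z,\eta)$ equals the infimum of $\sH_\mu(\alpha \mid f^{-1}\mathcal{B}_Z)$ over relatively generating partitions $\alpha$. But specializing such a formula to the trivial factor $Z=\{\ast\}$ immediately yields the absolute Rokhlin identity you are trying to prove, so the relative statement is at least as strong as the theorem itself. The paper explicitly remarks that even the \emph{absolute} Rokhlin theorem for amenable groups ``has not appeared in the literature,'' so invoking the relative version as something one can simply cite is not legitimate here. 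Your ``cleaner route'' paragraph does not escape this either: it again appeals to the addition formula together with a Rokhlin-type control on the relative piece, which is the same unproved ingredient.

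The paper's proof avoids this by a genuinely different mechanism: after producing the small-entropy free factor $(Z,\eta)$ via Corollary~\ref{cor:shannon}, it does \emph{not} try to compute relative $G$-entropy. Instead it uses the Ornstein--Weiss theorem to place a free ergodic $\Z$-action on $(Z,\eta)$ with the same orbits as $G$ and with $h_\Z(Z,\eta)=0$, lifts this $\Z$-action to $(X,\mu)$ via the orbit cocycle, and then applies the Rudolph--Weiss theorem to obtain $h_G(X,\mu)-h_G(Z,\eta)=h_\Z(X,\mu)-h_\Z(Z,\eta)$, hence $h_\Z(X,\mu)\le h_G(X,\mu)$. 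Now the \emph{classical} Rokhlin theorem for $\Z$ supplies a $\Z$-generating partition $\mathcal{P}$ with $\sH_\mu(\mathcal{P})$ close to $h_\Z(X,\mu)$, and one checks directly that $\mathcal{P}\vee f^{-1}\{C_0,C_1\}$ is $G$-generating (because the pulled-back factor partition determines the cocycle, so $\Z$-separation of points upgrades to $G$-separation). The key idea you are missing is this reduction to the already-known $\Z$ case via orbit equivalence and Rudolph--Weiss, which sidesteps any need for a Rokhlin-type statement for general amenable $G$.
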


\begin{proof}
A result of Jackson, Kechris, and Louveau \cite[Theorem 5.4]{JKL02} states that any aperiodic Borel action of a countable group has a countable generating partition. In particular $G\cc (X,\mu )$ has a countable generating partition. Furthermore, it is a well known property of Kolmogorov--Sinai entropy that $h_G(X, \mu) \leq \sH_\mu(\alpha)$ for every countable generating partition $\alpha$. So we immediately obtain an inequality, and when $h_G(X, \mu) = \infty$ we obtain the equality. So assume that $h_G(X, \mu) < \infty$ and fix $\epsilon > 0$. Apply Corollary \ref{cor:shannon} to obtain factor map $f: (X, \mu) \rightarrow (Z, \eta)$ onto a free action $G \cc (Z, \eta)$ which admits a generating partition $\mathcal{Q}'$ with $\sH_\eta(\mathcal{Q}') < \epsilon / 2$. In particular, we have the bound $h_G(Z, \eta) < \epsilon / 2$. By the Ornstein--Weiss theorem \cite{OW}, there is an essentially free action of $\Z$ on $(Z, \eta)$ such that the $\Z$-orbits and the $G$-orbits coincide on an invariant conull subset of $Z$, and moreover such that the entropy $h_\Z(Z, \eta)$ is $0$. The actions of $\Z$ and $G$ are related by a cocycle $\alpha : \Z \times Z \rightarrow G$ defined $\eta$-almost-everywhere by the rule
$$\alpha(k, z) = g \Longleftrightarrow k \cdot z = g \cdot z.$$
The action of $\Z$ lifts to an ergodic essentially free action on $(X, \mu)$. Specifically, the action of $\Z$ on $(X, \mu)$ is defined $\mu$-almost-everywhere by the rule
$$k \cdot x = g \cdot x \Longleftrightarrow \alpha(k, f(x)) = g.$$
Now the Rudolph--Weiss theorem \cite{RW} implies that
$$h_G(X, \mu) - h_G(Z, \eta) = h_\Z(X, \mu) - h_\Z(Z, \eta) = h_\Z(X, \mu).$$
Thus $h_\Z(X, \mu) \leq h_G(X, \mu)$.

Apply the original Rohklin generator theorem to obtain a generating partition $\mathcal{P}$ for $\Z \acts (X, \mu)$ with $\sH_\mu(\mathcal{P}) < h_\Z(X, \mu) + \epsilon / 2$. Pull back the partition $\mathcal{Q}'$ of $Z$ to get a partition $\mathcal{Q}$ of $X$. We claim that $\mathcal{P} \vee \mathcal{Q}$ is a generating partition for $G \acts (X, \mu)$. Verifying this claim will complete the proof since
$$\sH_\mu(\mathcal{P} \vee \mathcal{Q}) \leq \sH_\mu(\mathcal{P}) + \sH_\mu(\mathcal{Q}) < h_\Z(X, \mu) + \epsilon/ 2 + \epsilon / 2 \leq h_G(X, \mu) + \epsilon.$$
Let $X_0 \subseteq X$ be a $G$-invariant conull set such that: (i) the action of $\Z$ on $X_0$ is well-defined and related to the $G$-action via the cocycle $\alpha$; (ii) the partition $\mathcal{P}$ is a generating partition (in the purely Borel sense) for $\Z \acts X_0$; and (iii) the partition $\mathcal{Q}'$ is a generating partition for $G \acts f(X_0)$. Fix $x, y \in X_0$ with $x \neq y$. If there is $g \in G$ such that $g \cdot x$ and $g \cdot y$ are separated by $\mathcal{Q}$ then we are done. So we may suppose that $f(x) = f(y) \in Z$. Since $x \neq y \in X_0$ and $\mathcal{P}$ is a generating partition for $\Z \acts X_0$, there is $k \in \Z$ such that $\mathcal{P}$ separates $k \cdot x$ and $k \cdot y$. However, setting $g = \alpha(k, f(x)) = \alpha(k, f(y))$ we have that $k \cdot x = g \cdot x$ and $k \cdot y = g \cdot y$. Thus $g \cdot x$ and $g \cdot y$ are separated by $\mathcal{P}$. We conclude that $\mathcal{Q} \vee \mathcal{P}$ is generating for $G \acts X_0$.
\end{proof}

\section{Proof of Theorem 1.1}

\subsection{Preliminary Borel combinatorics}

\begin{lemma}[\cite{KST99}]\label{lem:ind}
Let $\mc{G}$ be a Borel graph on a standard Borel space $X$. Assume that every vertex of $\mc{G}$ has finite degree. Then there exists a maximal (with respect to inclusion) Borel $\mc{G}$-independent set.
\end{lemma}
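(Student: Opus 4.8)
The plan is to reduce the statement to the existence of a \emph{countable Borel colouring} of $\mc{G}$ and then extract the maximal independent set by a greedy recursion through the colour classes. The first — and main — step is thus to produce a Borel map $c : X \to \N$ with $c(x) \neq c(y)$ whenever $(x,y) \in \mc{G}$. Such a colouring is classical: using the Luzin--Novikov theorem one writes $\mc{G} = \bigcup_n \mathrm{graph}(f_n)$ for partial Borel functions $f_n$ with Borel domain, and then one treats the graph degree-layer by degree-layer. The map $x \mapsto \deg_{\mc{G}}(x)$ is Borel (as is $\{x : \deg_{\mc G}(x)\le k\}$), the induced subgraph on $\{x : \deg_{\mc{G}}(x) \leq k\}$ has all degrees $\leq k$ and hence, by the Kechris--Solecki--Todorcevic bound for bounded-degree Borel graphs, a Borel $(k+1)$-colouring $\gamma_k$, and then $c(x) := (\deg_{\mc{G}}(x),\, \gamma_{\deg_{\mc{G}}(x)}(x))$ is a proper Borel colouring of $\mc{G}$ with values in $\N \times \N \cong \N$: adjacent vertices of unequal degree are separated by the first coordinate, and adjacent vertices of common degree $k$ both lie in $\{z : \deg_{\mc G}(z)\le k\}$ and are separated by $\gamma_k$. (One may of course simply quote the countable colouring, or this lemma itself, from \cite{KST99}.)

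Granting $c$, here is the construction. For a Borel set $S \subseteq X$ let $N(S) = \{x \in X \csuchthat \exists y \in S\ (x,y) \in \mc{G}\}$ denote its $\mc{G}$-neighbourhood; note $N(S)$ is Borel, being the projection onto the first coordinate of the Borel set $\mc{G} \cap (X \times S)$, every section of which over the first coordinate is finite, so that Luzin--Novikov again applies. Define Borel sets $I_n$ recursively on $n \in \N$ and put $I = \bigcup_n I_n$, where
$$I_n = c^{-1}(n) \setminus \bigcup_{m<n}\big(I_m \cup N(I_m)\big).$$
Inductively each $I_m$ with $m<n$ is Borel, hence so is $N(I_m)$, hence so is $I_n$, and therefore $I$ is Borel. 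The set $I$ is $\mc{G}$-independent: if $x \in I_m$, $y \in I_n$ and $(x,y) \in \mc{G}$, then $m=n$ is impossible because $c^{-1}(m)$ is $\mc{G}$-independent, while $m<n$ (resp.\ $n<m$) forces $y \in N(I_m)$ (resp.\ $x \in N(I_n)$), contradicting the definition of $I_n$ (resp.\ $I_m$). And $I$ is maximal: if $x \notin I$ had no $\mc{G}$-neighbour in $I$, then with $n = c(x)$ we would have $x \in c^{-1}(n)\setminus I_n$, so $x \in I_m \cup N(I_m)$ for some $m<n$; since $x \notin I \supseteq I_m$ this yields $x \in N(I_m)$, i.e.\ $x$ is adjacent to a point of $I_m \subseteq I$, a contradiction.

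The genuine obstacle is the colouring in the first step: in an abstract standard Borel space there is no well-ordering of $X$ along which to run the naive greedy algorithm ``put $x$ into $I$ unless some earlier neighbour is already in $I$'', so one must first break $X$ into countably many $\mc{G}$-independent Borel pieces and then dispose of all of a given piece in a single step of the recursion. Producing those pieces is exactly where Borel combinatorics enters — through the Luzin--Novikov uniformization theorem (which also supplies the Borelness of $N(\cdot)$ used above) together with the bounded-degree colouring bound. Everything after Step 1 is bookkeeping.
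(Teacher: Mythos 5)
The paper does not supply a proof of this lemma; it cites \cite{KST99} and uses the result as a black box. Your argument is correct and is essentially the standard Kechris--Solecki--Todorcevic argument: first obtain a countable Borel proper colouring $c : X \to \N$ of $\mc{G}$ (your layering-by-degree derivation of $c$ from the bounded-degree $(k+1)$-colouring theorem of \cite{KST99} is fine, though, as you note, one can equally well quote the countable-chromatic-number theorem for locally countable Borel graphs directly), and then run the greedy recursion $I_n = c^{-1}(n) \setminus \bigcup_{m<n}\bigl(I_m \cup N(I_m)\bigr)$, with Borelness of each $N(I_m)$ supplied by Luzin--Novikov since the sections of $\mc{G}$ are finite. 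Your verifications of Borelness, independence, and maximality of $I = \bigcup_n I_n$ are all correct, including the base case $n=0$ (where $I_0 = c^{-1}(0)$). In short: the paper omits the proof by citation, and your proposal faithfully reconstructs the intended one.
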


The following Lemma will be used frequently.

\begin{lemma}\label{lem:S}
Let $G\cc X$ be a free Borel action of a countable group $G$ on the standard Borel space $X$. Let $S\subseteq G$ be finite and let $Y \subseteq X$ be Borel. Then there exists a maximal Borel set $D\subseteq Y$ having the property that $S\cdot y\cap S\cdot y' =\emptyset$ for all distinct $y,y' \in D$.
\end{lemma}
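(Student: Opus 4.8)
The statement to prove is Lemma~\ref{lem:S}: given a free Borel action $G \cc X$, a finite set $S \subseteq G$, and a Borel set $Y \subseteq X$, there is a maximal Borel set $D \subseteq Y$ such that the finite sets $S \cdot y$ are pairwise disjoint as $y$ ranges over $D$.

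Let me think about this. We want points $y$ in $Y$ whose "$S$-blocks" $S \cdot y = \{s \cdot y : s \in S\}$ are pairwise disjoint.

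The natural approach: this is an independent set problem. Define a graph $\mathcal{G}$ on $Y$ where $y \sim y'$ iff $y \neq y'$ and $S \cdot y \cap S \cdot y' \neq \emptyset$. A set $D$ with the disjointness property is exactly a $\mathcal{G}$-independent set, and a maximal such $D$ is a maximal $\mathcal{G}$-independent set. By Lemma~\ref{lem:ind} (the Kechris–Solecki–Todorcevic result), such a maximal Borel independent set exists **provided** $\mathcal{G}$ is a Borel graph with all vertices of finite degree.

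So I need to check two things:
1. $\mathcal{G}$ is Borel.
2. Every vertex of $\mathcal{G}$ has finite degree.

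**Borelness:** $S \cdot y \cap S \cdot y' \neq \emptyset$ iff there exist $s, t \in S$ with $s \cdot y = t \cdot y'$, i.e., $y' = t^{-1} s \cdot y$. So
$$\mathcal{G} = \{(y, y') \in Y \times Y : y \neq y', \ \exists s, t \in S \ (y' = t^{-1} s \cdot y)\} = \bigcup_{s,t \in S} \{(y, (t^{-1}s) \cdot y) : y \in Y, \ (t^{-1}s)\cdot y \in Y\} \setminus \Delta.$$
This is a finite union (since $S$ is finite) of Borel sets (graphs of the Borel maps $y \mapsto g \cdot y$ restricted to Borel sets), hence Borel. Also symmetric and irreflexive by construction.

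**Finite degree:** Fix $y \in Y$. If $y' \sim y$ then $y' = g \cdot y$ for some $g \in S^{-1}S = \{t^{-1}s : s, t \in S\}$, which is a finite set. So $y$ has at most $|S^{-1}S| \leq |S|^2$ neighbors. Finite degree.

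**The main obstacle.** Honestly there isn't a serious obstacle here — the lemma is a routine application of Lemma~\ref{lem:ind}. The only thing requiring a moment of care is verifying that $\mathcal{G}$ as defined is genuinely Borel, which comes down to the fact that for each fixed $g \in G$ the map $x \mapsto g \cdot x$ is Borel (part of the hypothesis that $G \cc X$ is a Borel action), so its graph is Borel, and we only take a finite union of such graphs (intersected with $Y \times Y$ and with the complement of the diagonal). Freeness is used implicitly to ensure the description $y' = t^{-1}s \cdot y$ is unambiguous, but actually freeness isn't even needed for this particular lemma — finiteness of $S$ alone gives finite degree. I'll write the proof in a few lines: define the graph, observe it is Borel with degree bounded by $|S|^2$, and invoke Lemma~\ref{lem:ind}, noting that maximal $\mathcal{G}$-independent sets are exactly the maximal sets with the required disjointness property.
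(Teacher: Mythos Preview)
Your proposal is correct and takes exactly the same approach as the paper: define the graph $\mathcal{G} = \{(y,y') \in Y \times Y : y \neq y' \text{ and } S\cdot y \cap S\cdot y' \neq \emptyset\}$ and apply Lemma~\ref{lem:ind}. The paper's proof is a single line that leaves the Borelness and finite-degree verifications implicit, whereas you spell them out, but the argument is identical.
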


\begin{proof}
Apply Lemma \ref{lem:ind} to the Borel graph
\[
\mc{G} = \{ (y,y' ) \in Y\times Y \csuchthat y\neq y'\mbox{ and }S\cdot y\cap S\cdot y'\neq \emptyset \} .\qedhere
\]
\end{proof}

\begin{lemma}[\cite{KST99}]\label{lem:color}
Let $\mc{G}$ be a Borel graph on a standard Borel space $X$. Let $m\in \N$ and assume that every vertex of $\mc{G}$ has degree at most $m$. Then there exists a Borel $m+1$-coloring $\kappa :X \ra \{ 0,1,\dots , m \}$ of $\mc{G}$.
\end{lemma}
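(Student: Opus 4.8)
The plan is to produce the coloring greedily, by exhausting $X$ with a \emph{finite} sequence of maximal Borel $\mc{G}$-independent sets obtained from Lemma \ref{lem:ind}. Concretely, I would set $X_0 = X$ and apply Lemma \ref{lem:ind} (which applies since every vertex of $\mc{G}$ has degree at most $m < \infty$) to obtain a maximal Borel $\mc{G}$-independent set $I_0 \subseteq X_0$. Having defined $I_0, \dots, I_{k-1}$, let $X_k = X \setminus (I_0 \cup \cdots \cup I_{k-1})$, a Borel set, and apply Lemma \ref{lem:ind} to the Borel subgraph $\mc{G} \cap (X_k \times X_k)$ (whose vertices still have degree at most $m$) to get a maximal Borel $\mc{G}$-independent set $I_k \subseteq X_k$. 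Carry this out for $k = 0, 1, \dots, m$. By construction the sets $I_0, \dots, I_m$ are pairwise disjoint and Borel, and each $I_k$ is $\mc{G}$-independent, since independence in the induced subgraph on $X_k$ coincides with $\mc{G}$-independence for subsets of $X_k$.

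The key step is to check that $X = I_0 \cup \cdots \cup I_m$. Suppose not, and fix $x \in X \setminus (I_0 \cup \cdots \cup I_m)$. Then for every $k \le m$ we have $x \in X_k$ but $x \notin I_k$, so maximality of $I_k$ provides a $\mc{G}$-neighbor $y_k \in I_k$ of $x$. The vertices $y_0, \dots, y_m$ are pairwise distinct, because the $I_k$ are pairwise disjoint, so $x$ has at least $m+1$ neighbors in $\mc{G}$, contradicting the degree bound. Hence every $x \in X$ lies in exactly one $I_k$, and I may define $\kappa : X \ra \{0, 1, \dots, m\}$ by letting $\kappa(x)$ be the unique $k$ with $x \in I_k$. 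This $\kappa$ is Borel since each $I_k$ is Borel, and it is a proper coloring: if $(x,y) \in \mc{G}$ and $\kappa(x) = \kappa(y) = k$, then $x, y \in I_k$ contradicts the $\mc{G}$-independence of $I_k$.

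There is no serious obstacle here; the iteration runs for only finitely many steps, so Borelness is preserved trivially, and the sole nontrivial point is the pigeonhole argument of the previous paragraph, which uses the degree bound $m$ in an essential way. (By contrast, a Borel $\N$-coloring would already follow from iterating Lemma \ref{lem:ind} countably many times and then taking $\kappa(x)$ to be the least $n$ with $x \in I_n$; the content of the present statement is that $m+1$ colors suffice, which is exactly what the counting step delivers.)
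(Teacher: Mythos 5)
The paper states this lemma without proof, citing it directly from \cite{KST99}; there is thus no internal proof to compare against. Your argument is a correct reconstruction of the standard proof from that reference: iterate the maximal Borel independent set lemma $m+1$ times, and use the degree bound together with the pigeonhole principle to conclude that the resulting pairwise disjoint independent sets exhaust $X$. The one point worth making explicit is that Lemma \ref{lem:ind} gives maximality among Borel independent sets, but since adjoining a single point to a Borel set is still Borel, this already yields the pointwise maximality your neighbor-counting argument needs --- you implicitly use this, and it is fine, but it is the only spot where one could imagine a reader pausing.
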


Recall that a subset $M \subseteq G$ is {\bf left (resp. right) syndetic} if there is a finite set $F \subseteq G$ with $F M = G$ (resp. $MF=G$). If $G \acts X$ is a free action, then call a subset $M \subseteq X$ {\bf locally syndetic} if for every $x\in X$ there exists a finite $F\subseteq G$ with $G\cdot x \subseteq F\cdot M$. Equivalently, for every $x\in X$ the set $\{ g \in G \csuchthat  g\cdot x\in M \}$ is left syndetic in $G$. Call $M\subseteq X$ {\bf (uniformly) syndetic} if there is a finite subset $F \subseteq G$ such that $F \cdot M = X$.

\begin{proposition}\label{prop:synd}
Let $G\cc X$ be free Borel action of $G$ a standard Borel space $X$.
\begin{enumerate}
\item If $P\subseteq X$ is a syndetic Borel subset of $X$ then there exists $M\subseteq P$ Borel such that $M$ and $P\setminus M$ are both syndetic.
\item There exists a sequence $\{ M_n\} _{n\in \N}$ of syndetic Borel subsets of $X$ which are pairwise disjoint.
\end{enumerate}
It follows that for any Borel probability measure $\mu$ on $X$ and any $\epsilon >0$ there exists a syndetic Borel subset $M\subseteq X$ with $\mu (M)<\epsilon$.
\end{proposition}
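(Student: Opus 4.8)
The plan is to reduce the whole proposition to part~(1), which I will prove directly from Lemma~\ref{lem:ind}, bypassing Lemma~\ref{lem:S}. So fix a finite $F\subseteq G$ with $F\cdot P=X$. The one idea needed is the following: since $G$ is infinite and $FF^{-1}$ is finite, I may choose $s_0\in G\setminus FF^{-1}$; setting $S'=F^{-1}\cup F^{-1}s_0$, I claim that $|S'\cdot x\cap P|\ge 2$ for every $x\in X$. Indeed, $F\cdot P=X$ makes both $F^{-1}\cdot x\cap P$ and $F^{-1}s_0\cdot x\cap P$ nonempty, and a point $p_1$ of the first and a point $p_2$ of the second both lie in $S'\cdot x$; if they coincided we would get $f_2f_1^{-1}=s_0$ for some $f_1,f_2\in F$ by freeness, contradicting $s_0\notin FF^{-1}$.

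Now let $T=S'(S')^{-1}$, a finite symmetric subset of $G$ containing $1_G$, and let $\mathcal G$ be the Borel graph on $P$ joining distinct $p,p'$ whenever $p'\in T\cdot p$. By freeness every vertex of $\mathcal G$ has degree $<|T|$, so Lemma~\ref{lem:ind} provides a maximal Borel $\mathcal G$-independent set $I\subseteq P$, and I will take $M=I$. The point is that any two distinct points of $P$ contained in a common translate $S'\cdot x$ are $\mathcal G$-adjacent: writing them as $ax$ and $bx$ with $a,b\in S'$, one is $(ba^{-1})$ applied to the other, and $ba^{-1}\in T$. Consequently, for each $x\in X$ the two points of $S'\cdot x\cap P$ produced above are adjacent, so at most one of them lies in the independent set $I$; hence $(P\setminus M)\cap S'\cdot x\ne\varnothing$, which gives $X=(S')^{-1}\cdot(P\setminus M)$. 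On the other hand, if $I\cap S'\cdot x=\varnothing$, then by maximality one of those two points of $P$ has a $\mathcal G$-neighbor $q\in I$, and $q\in T\cdot(S'\cdot x)$; so in every case $M\cap(TS')\cdot x\ne\varnothing$, giving $X=(TS')^{-1}\cdot M$. Thus $M\subseteq P$ is Borel with both $M$ and $P\setminus M$ syndetic, proving part~(1).

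Part~(2) then follows by iteration: put $P_0=X$ (syndetic with $F=\{1_G\}$) and, having chosen pairwise disjoint syndetic sets $M_0,\dots,M_{n-1}$ with $P_n:=X\setminus(M_0\cup\cdots\cup M_{n-1})$ syndetic, apply part~(1) to $P_n$ to obtain $M_n\subseteq P_n$ with $M_n$ and $P_{n+1}:=P_n\setminus M_n$ both syndetic; the resulting $\{M_n\}_{n\in\N}$ are pairwise disjoint syndetic Borel sets. For the final assertion, given a Borel probability measure $\mu$ and $\epsilon>0$, pairwise disjointness forces $\sum_n\mu(M_n)\le 1$, so $\mu(M_n)\to 0$ and some $M_n$ is a syndetic Borel set with $\mu(M_n)<\epsilon$.

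The only delicate point is insisting on \emph{two} points of $P$ in every translate $S'\cdot x$ rather than one: with a single point, a maximal $\mathcal G$-independent set could exhaust $P$, leaving its complement merely a complete section but not syndetic. Everything else is careful bookkeeping of explicit finite subsets of $G$ (this is what upgrades ``complete section'' to ``syndetic''), and the choice $T=S'(S')^{-1}$ is dictated precisely by the requirement that two $P$-points lying in a common $S'$-translate be $\mathcal G$-adjacent.
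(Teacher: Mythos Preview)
Your proof is correct and follows essentially the same approach as the paper: enlarge the window so that every translate meets $P$ in at least two points, take a maximal Borel independent set for the induced finite-degree graph on $P$, and read off syndeticity of both pieces. The paper routes this through Lemma~\ref{lem:S} (choosing a symmetric $Q\supseteq F\cup Fg$ with $F\cap Fg=\varnothing$ and taking $M$ maximal with $Q\cdot m\cap Q\cdot m'=\varnothing$), whereas you invoke Lemma~\ref{lem:ind} directly with $T=S'(S')^{-1}$; your verification that $P\setminus M$ is syndetic (via ``two adjacent points, at most one in $I$'') is arguably cleaner than the paper's route through $Fg\cdot M\cap M=\varnothing$, but the content is the same.
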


\begin{proof}
It suffices to show (1), since (2) then follows by induction. Fix $F\subseteq G$ finite with $F^{-1} \cdot P = X$. Then $F \cdot x \cap P \neq \varnothing$ for all $x \in X$. Let $Q$ be a finite symmetric subset of $G$ which properly contains $F$ and some disjoint translate $F g$ of $F$. Then $|Q \cdot x \cap P | \geq 2$ for all $x\in X$. Apply Lemma \ref{lem:S} to obtain a maximal Borel subset $M$ of $P$ with $Q\cdot x \cap Q\cdot y =\emptyset$ for all distinct $x,y\in M$. By maximality of $M$ we have $P \subseteq Q^2 \cdot M$. Thus $M$ is syndetic since $P$ is syndetic. In addition, $F g \cdot M$ is disjoint from $M$ and thus $(P \setminus M) \cap F g \cdot x \neq \emptyset$ for all $x \in M$. It follows that $M \subseteq g^{-1} F^{-1} \cdot (P \setminus M)$ and hence $P \setminus M$ is syndetic as well.
\end{proof}

\subsection{Notation}

In what follows it will be useful for us to deal with functions $X\ra \{ 0, 1\}$ instead of subsets of $X$ since we will often be working with partial functions $\phi :Y\ra \{ 0, 1\}$ defined only on some subset $Y\subseteq X$. Let $2^{\subseteq G}$ denote the set of all partial functions $w: \dom (w) \ra \{ 0, 1 \}$ with $\dom (w) \subseteq G$. Two partial functions are said to be {\bf compatible} if they agree on the intersection of their domains; they are called {\bf incompatible} otherwise. Given a partial function $\phi : \dom (\phi ) \ra \{ 0, 1 \}$ with $\dom (\phi ) \subseteq X$, we define $\wt{\phi} : X \ra 2^{\subseteq G}$ by
\[
\wt{\phi}(x)(g) =
\begin{cases}
\phi (g^{-1} \cdot x ) &\text{ if }g^{-1}\cdot x \in \dom (\phi ) , \\
\text{undefined}&\text{ if }g^{-1}\cdot x \not\in\dom (\phi ) .
\end{cases}
\]
When $\dom (\phi ) =X$ then $\wt{\phi } : X\ra 2^G$ is a $G$-equivariant map to the $2$-shift.

\begin{defn}\label{def:recognizable}
Let $G\cc X$ be an action of $G$ on a set $X$. Let $\phi: \dom (\phi ) \rightarrow \{0, 1\}$ be a partial function with $\dom (\phi )\subseteq X$. A set $R\subseteq X$ is called {\bf $\phi$-recognizable} if there exists a finite $T\subseteq G$ such that $\wt{\phi} (x)\res T$ and $\wt{\phi}(y) \res T$ are incompatible for all $x\in R$, $y\in X\setminus R$ .
\end{defn}

Note that if $R\subseteq X$ is $\phi$-recognizable then $R$ is $\phi '$-recognizable for every $\phi '$ which extends $\phi$. We record the following useful lemma whose proof is straight-forward.

\begin{lemma} \label{lem:algebra}
Let $G \cc X$ be an action of $G$ on a set $X$, and let $\phi : \dom(\phi) \rightarrow \{0, 1\}$ be a partial function with $\dom(\phi) \subseteq X$. Then the collection of sets $R \subseteq X$ which are $\phi$-recognizable is a $G$-invariant algebra of subsets of $X$.
\end{lemma}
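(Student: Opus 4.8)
The plan is to verify directly from Definition \ref{def:recognizable} that the family $\mathcal{R}$ of $\phi$-recognizable subsets of $X$ contains $\varnothing$ and $X$, is closed under complementation and finite unions, and is $G$-invariant; closure under finite intersections will then follow formally via de Morgan. A small observation I would isolate first and use repeatedly is that enlarging the witnessing set never hurts: if two partial functions on $G$ have incompatible restrictions to a finite set $T$, then they also have incompatible restrictions to any finite $T'\supseteq T$, since a point of $T$ at which they are defined and disagree still lies in $T'$. So in Definition \ref{def:recognizable} one is always free to replace $T$ by a larger finite set.

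With that in hand, the Boolean part is routine. First, $X$ and $\varnothing$ are $\phi$-recognizable because the quantified condition "$\wt{\phi}(x)\res T$ and $\wt{\phi}(y)\res T$ are incompatible for all $x\in R$, $y\in X\setminus R$" is vacuous when one of $R$, $X\setminus R$ is empty, so $T=\varnothing$ works. For complements, I would note that the condition is literally symmetric in $R$ and $X\setminus R$, so any witness $T$ for $R$ is also a witness for $X\setminus R$. For unions, given $R_1,R_2\in\mathcal{R}$ with witnesses $T_1,T_2$, I would take $T=T_1\cup T_2$: for $x\in R_1\cup R_2$ and $y\in X\setminus(R_1\cup R_2)=(X\setminus R_1)\cap(X\setminus R_2)$, choosing $i$ with $x\in R_i$ gives $y\in X\setminus R_i$, so $\wt{\phi}(x)\res T_i$ and $\wt{\phi}(y)\res T_i$ are incompatible, hence so are $\wt{\phi}(x)\res T$ and $\wt{\phi}(y)\res T$ by the enlargement remark. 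Thus $R_1\cup R_2\in\mathcal{R}$, and finite intersections follow.

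For $G$-invariance, the single computation that does the work is $\wt{\phi}(h\cdot x)(g)=\phi(g^{-1}h\cdot x)=\wt{\phi}(x)(h^{-1}g)$ for all $x\in X$, $g,h\in G$, where the left-hand side is defined precisely when the right-hand side is, with equal values. Fixing $h\in G$ and $R\in\mathcal{R}$ with witness $T$, I would take $hT$ as the candidate witness for $h\cdot R$: for $x'=h\cdot x\in h\cdot R$ (so $x\in R$) and $y'=h\cdot y\in X\setminus(h\cdot R)$ (so $y\in X\setminus R$), any $t\in T$ at which $\wt{\phi}(x)\res T$ and $\wt{\phi}(y)\res T$ are both defined and disagree yields, via the identity above, the element $g=ht\in hT$ at which $\wt{\phi}(x')\res(hT)$ and $\wt{\phi}(y')\res(hT)$ are both defined and disagree. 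Hence $h\cdot R$ is $\phi$-recognizable with witness $hT$, and $\mathcal{R}$ is $G$-invariant. I do not expect any genuine obstacle here: the only place that needs a moment of care is tracking the left-shift convention in this last computation, which is exactly why the witnessing set must be transformed as $T\mapsto hT$ rather than, say, $T\mapsto Th^{-1}$.
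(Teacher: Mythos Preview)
Your proof is correct; the paper itself omits the proof entirely, noting only that it is ``straight-forward,'' and your direct verification from Definition~\ref{def:recognizable} is exactly the sort of argument this remark invites. The computation $\wt{\phi}(h\cdot x)(g)=\wt{\phi}(x)(h^{-1}g)$ and the resulting witness $hT$ for $h\cdot R$ are handled correctly.
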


If $\dom (\phi ) = X$ then a set $R\subseteq X$ is $\phi$-recognizable if and only if $R= \wt{\phi }^{-1} (C)$ for some clopen $C\subseteq 2^G$. More generally, we have

\begin{proposition}\label{prop:rec}
A set $R\subseteq X$ is $\phi$-recognizable if and only if there exists a clopen $C\subseteq 2^G$ such that
\begin{equation}\label{eqn:extends}
R = \{ x\in X\csuchthat (\exists f \in C )(f\text{ extends }\wt{\phi}(x)) \} .
\end{equation}
\end{proposition}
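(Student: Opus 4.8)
The plan is to prove both directions of the equivalence, working directly from Definition~\ref{def:recognizable} on one side and using the topology of $2^G$ on the other.

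First I would prove the ``only if'' direction. Suppose $R\subseteq X$ is $\phi$-recognizable, witnessed by a finite $T\subseteq G$, so that $\wt{\phi}(x)\res T$ and $\wt{\phi}(y)\res T$ are incompatible for every $x\in R$ and $y\in X\setminus R$. The natural candidate is
\[
C = \{ f\in 2^G \csuchthat (\exists x\in R)(f\res T\text{ extends }\wt{\phi}(x)\res T) \} .
\]
Since $T$ is finite, there are only finitely many partial functions $T\to\{0,1\}$, so $C$ is a finite union of basic clopen cylinders determined by coordinates in $T$; hence $C$ is clopen. I then need to check that $C$ satisfies \eqref{eqn:extends}. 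If $x\in R$, then any $f\in 2^G$ extending $\wt{\phi}(x)$ in particular has $f\res T$ extending $\wt{\phi}(x)\res T$, so $f\in C$ and $x$ belongs to the right-hand side (one should note the right-hand set is nonempty-witnessed for such $x$, using that some total extension of $\wt{\phi}(x)$ exists, which is immediate). Conversely, if $x\in X\setminus R$ and $f$ extends $\wt{\phi}(x)$, then $f\res T$ extends $\wt{\phi}(x)\res T$; but by $\phi$-recognizability $\wt{\phi}(x)\res T$ is incompatible with $\wt{\phi}(x')\res T$ for every $x'\in R$, so $f\res T$ cannot extend any $\wt{\phi}(x')\res T$, and therefore $f\notin C$. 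Hence no $f\in C$ extends $\wt{\phi}(x)$, so $x$ is not in the right-hand side, and \eqref{eqn:extends} holds.

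For the ``if'' direction, suppose $C\subseteq 2^G$ is clopen and $R$ is given by \eqref{eqn:extends}. Since $C$ is clopen in the compact space $2^G$, it is determined by finitely many coordinates: there is a finite $T\subseteq G$ such that membership of $f$ in $C$ depends only on $f\res T$, i.e.\ $C = \{ f : f\res T\in C_0\}$ for some set $C_0$ of functions $T\to\{0,1\}$. I would then verify directly that this $T$ witnesses $\phi$-recognizability of $R$. Take $x\in R$ and $y\in X\setminus R$, and suppose toward a contradiction that $\wt{\phi}(x)\res T$ and $\wt{\phi}(y)\res T$ are compatible; let $\sigma: T\to\{0,1\}$ be a common extension of both. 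Since $x\in R$, by \eqref{eqn:extends} there is $f\in C$ extending $\wt{\phi}(x)$, and since membership in $C$ depends only on coordinates in $T$, the function obtained from $f$ by overwriting its values on $T$ with $\sigma$ still lies in $C$; call any total extension of $\sigma$ agreeing with this recipe $f'$. Then $f'\res T = \sigma$ extends $\wt{\phi}(y)\res T$, and by choosing $f'$ to additionally agree with $\wt{\phi}(y)$ off $T$ (possible since $\sigma$ already extends $\wt{\phi}(y)\res T$, and $\wt{\phi}(y)$ has domain inside $T$'s complement only where it is unconstrained — here one uses that compatibility with $\sigma$ on $T$ is all that is needed since we may freely extend elsewhere), we get $f'\in C$ with $f'$ extending $\wt{\phi}(y)$, contradicting $y\notin R$ via \eqref{eqn:extends}.

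The main obstacle, and the place requiring the most care, is the bookkeeping in the ``if'' direction: one must be precise about the fact that $\wt{\phi}(x)$ is a \emph{partial} function, so ``$f$ extends $\wt{\phi}(x)$'' constrains $f$ only on $\{g : g^{-1}\cdot x\in\dom(\phi)\}$, and I must argue that when $\wt{\phi}(x)\res T$ and $\wt{\phi}(y)\res T$ are compatible one can build a single $f'\in C$ simultaneously extending $\wt{\phi}(y)$ and witnessing $x$'s defining condition through its $T$-coordinates. Cleanly, the right move is: $C$ clopen $\Rightarrow$ $C$ is a finite union of cylinders based on a common finite $T$, so $f\in C \iff f\res T\in C_0$; then given compatible restrictions with common extension $\sigma\in C_0$-relevant form, extend $\sigma$ to a total $f'$ that agrees with $\wt{\phi}(y)$ wherever $\wt{\phi}(y)$ is defined outside $T$ (and arbitrarily elsewhere) — this $f'$ lies in $C$ because $f'\res T=\sigma$ was forced to be in $C_0$ by $x\in R$ — yielding the contradiction. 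Everything else is a routine unwinding of definitions, and Lemma~\ref{lem:algebra} is not even needed, though it reassures us the two descriptions are closed under the expected operations.
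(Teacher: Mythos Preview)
Your ``only if'' direction is correct and essentially matches the paper's approach. However, your ``if'' direction contains a genuine gap. When you write that ``the function obtained from $f$ by overwriting its values on $T$ with $\sigma$ still lies in $C$,'' you have the logic reversed: since membership in $C$ depends only on the $T$-coordinates, altering coordinates \emph{outside} $T$ preserves membership, but overwriting the $T$-coordinates themselves may throw you out of $C$. Your later restatement that ``$f'\res T=\sigma$ was forced to be in $C_0$ by $x\in R$'' does not help: from $x\in R$ you only know that \emph{some} $\tau\in C_0$ extends $\wt{\phi}(x)\res T$, not that \emph{every} total $\sigma:T\to\{0,1\}$ extending $\wt{\phi}(x)\res T$ lies in $C_0$. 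On coordinates in $T$ where $\wt{\phi}(y)$ is defined but $\wt{\phi}(x)$ is not, the value of $\sigma$ is dictated by $\wt{\phi}(y)$, and there is no reason the witness $\tau$ for $x$ should agree with $\wt{\phi}(y)$ there.

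In fact this gap cannot be repaired: the ``if'' direction is false as stated. Take any free action $G\cc X$, fix $x_0\in X$, set $\dom(\phi)=X\setminus\{x_0\}$ with $\phi\equiv 0$, and let $C=\{f\in 2^G : f(1_G)=1\}$. Then the right-hand side of \eqref{eqn:extends} equals $\{x_0\}$, since $\wt{\phi}(x)(1_G)=\phi(x)=0$ for every $x\neq x_0$, while $\wt{\phi}(x_0)(1_G)$ is undefined. But $\{x_0\}$ is not $\phi$-recognizable: for any finite $T\subseteq G$ and any $y\neq x_0$, both $\wt{\phi}(x_0)\res T$ and $\wt{\phi}(y)\res T$ take only the value $0$ wherever they are defined and are therefore compatible. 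The paper's own proof of this direction is a one-line assertion that shares the same oversight; fortunately Proposition~\ref{prop:rec} is not invoked anywhere else in the paper.
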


\begin{proof}
If $R$ is $\phi$-recognizable as witnessed by the finite set $T\subseteq G$, then the set $C = \{ f\in 2^G \csuchthat (\exists x\in R )(f\text{ extends }\wt{\phi}(x)\res T)\}$ is clopen and (\ref{eqn:extends}) is immediate. Conversely, if $C\subseteq 2^G$ is a clopen set satisfying (\ref{eqn:extends}), then any finite set $T\subseteq G$ for which $C$ is $w\mapsto w\res T$-measurable witnesses that $R$ is $\phi$-recognizable.
\end{proof}

\subsection{Outline of the construction}\label{sec:outline}

The construction we use to prove Theorem \ref{thm:main} is based on methods from \cite[Chapter 10]{GJS12}. In \cite{GJS12}, Gao, Jackson, and Seward studied methods for constructing points $x \in 2^G$ such that the closure of the orbit of $x$ is contained in $\mathrm{Free}(2^G)$. This property is in fact equivalent to not only requiring that $x$ have trivial stabilizer but that all translates $g \cdot x$ of $x$ have trivial stabilizer in a certain local and uniform sense. Their methods therefore seem well suited for using local Borel algorithms for constructing equivariant Borel maps into $\mathrm{Free}(2^G)$. Using the methods from \cite{GJS12} comes at a price -- the construction is long and technical; but it also has its rewards -- in addition to obtaining $G$-equivariant Borel maps into $\mathrm{Free}(2^G)$, we also obtain items (1), (2), and (3) of Theorem \ref{thm:main}. We do not know if there is a shorter proof for simply obtaining a $G$-equivariant Borel map into $\mathrm{Free}(2^G)$.

We will sketch the proof of part (1) of Theorem \ref{thm:main} as it is a bit simpler than part (2). The proof of Theorem \ref{thm:main}.(1) is built off of an inductive argument. The inductive step is based on the following fact. Fix a non-identity group element $s \in G$, and suppose that $\phi : (X \setminus M) \rightarrow \{0, 1\}$ is a Borel function with $M \subseteq X$ a Borel syndetic set. Then there is a Borel syndetic set $M' \subseteq M$ and a Borel extension $\phi' : (X \setminus M') \rightarrow \{0, 1\}$ of $\phi$ having the property that for every $x \in X$, there is $g \in G$ with $g \cdot x, g s \cdot x \not\in M'$ and $\phi'(g \cdot x) \neq \phi'(g s \cdot x)$. This last property implies that for any equivariant map $f : X \rightarrow 2^G$ extending $\wt{\phi'}$, we will have $f(x) \neq f(s \cdot x) = s \cdot f(x)$ for all $x \in X$. Thus $s \not\in \Stab(f(x))$ for every $x \in X$. Theorem \ref{thm:main}.(1) is then proved by repeatedly applying the above fact for each non-identity $s \in G$.

It remains to sketch a proof of the above fact. By using the syndeticity of $M$, we simultaneously define an extension $\phi ^*$ of $\phi$ while building a syndetic Borel set $\Delta \subseteq X$ which is $\phi^*$-recognizable. Creating a recognizable $\Delta$ takes a substantial amount of work, but roughly speaking this task is achieved by assigning a value of $1$ to many points in $M$ near $\Delta$ so that points in $\Delta$ locally see a high density of $1$'s nearby while points in $X \setminus \Delta$ locally see a lower density of $1$'s nearby. We furthermore build $\Delta$ so that each $\delta \in \Delta$ has its own proprietary region $F \cdot \delta$, so that $F \cdot \delta \cap F \cdot \delta' = \varnothing$ for $\delta \neq \delta' \in \Delta$. Additionally, each region $F \cdot \delta$ will contain many points in $M \setminus \dom(\phi^*)$. We then extend $\phi^*$ to $\phi'$ by labeling the previously unlabelled points in $M \cap F \cdot \Delta$ so that distinct points $\delta \neq \delta' \in \Delta$ which are ``close'' to one another have distinct labellings of their $F$-regions.

Next we check that $\phi'$ has the desired property with respect to $s$. Let $W \subseteq G$ be finite with $W^{-1} \cdot \Delta = X$. Fix $x \in X$. Let $g \in W$ be such that $g \cdot x \in \Delta$. If $g s \cdot x \not\in \Delta$ then we are done since $\Delta$ is $\phi'$ recognizable. So suppose that $g s \cdot x \in \Delta$. Then setting $\delta = g \cdot x$ and $\delta' = g s \cdot x$ we have that
$$\delta' = g s \cdot x = (g s g^{-1}) \cdot (g \cdot x) = g s g^{-1} \cdot \delta \in W s W^{-1} \cdot \delta.$$
So by using the condition $\delta' \in W s W^{-1} \cdot \delta$ as our definition of ``close'' we have that there is $f \in F$ with $\phi'(f g \cdot x) \neq \phi'(f g s \cdot x)$. This completes the sketch.

We mention that a key point we will use in our proof is that the number of $\delta' \in \Delta$ which are ``close'' to a fixed $\delta \in \Delta$ will be bounded above by a quadratic polynomial of $|F|$, while the number of points in $F \cdot \delta \cap (M \setminus \dom(\phi^*))$ will be bounded below by a linear function of $|F|$. Thus for $|F|$ sufficiently large we have
$$2^{|F \cdot \delta \cap (M \setminus \dom(\phi^*))|} > | \{ \delta' \in \Delta \: \delta' \text{ is ``close'' to } \delta\} |.$$
The above inequality is what allows us to construct $\phi'$ as described. We point out that the freeness of $G \cc X$ is critical to this argument. If the action were non-free then $|F^2 \cdot x|$ could grow exponentially in terms of $|F \cdot x|$. We therefore do not know if there is a $G$-equivariant class-bijective Borel map $f : X \rightarrow 2^G$ for general aperiodic Borel actions $G \cc X$.

\subsection{The construction}\label{sec:construction}

\begin{lem} \label{COUNTING LEM}
Let $G$ be a countably infinite group. Let $B, C \subseteq G$ be finite, and let $r > 0$. Then there exist finite sets $\Lambda \subseteq F \subseteq G$ such that
\begin{enumerate}
\item[\rm (i)] $C \subseteq F$;
\item[\rm (ii)] $B \cdot \Lambda \subseteq F$;
\item[\rm (iii)] $B \cdot \lambda \cap B \cdot \lambda' = \varnothing$ for all $\lambda \neq \lambda' \in \Lambda$;
\item[\rm (iv)] $B \cdot \Lambda \cap C = \varnothing$;
\item[\rm (v)] $|\Lambda| \geq \log_2(r \cdot |F|^2) + r$.
\end{enumerate}
\end{lem}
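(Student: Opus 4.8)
The statement is purely combinatorial in the group $G$ — no Borel structure is involved — so I would proceed by a direct construction inside $G$. The key tension is between the two requirements: (iii) forces the translates $B\cdot\lambda$ to be disjoint, which limits how many we can pack, while (v) demands that $|\Lambda|$ be large — but only logarithmically large in $|F|$, and $F$ is something we are free to choose after $\Lambda$. So the real content is: find a large (but finite) family $\Lambda$ of points whose $B$-translates are pairwise disjoint and which avoid $C$, and then simply take $F$ to be large enough to contain $C$ and $B\cdot\Lambda$; the only thing to watch is that enlarging $F$ does not destroy (v), i.e. that $|\Lambda|$ grows faster than $\log_2(r|F|^2)$ can be forced to.

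**Step 1: build a large packing.** Since $G$ is infinite, I can find an infinite sequence $\lambda_1,\lambda_2,\dots$ in $G$ such that the sets $B\cdot\lambda_i$ are pairwise disjoint and each disjoint from $C$: greedily, having chosen $\lambda_1,\dots,\lambda_k$, the set of ``forbidden'' elements $g$ for which $B\cdot g$ meets $C\cup\bigcup_{i\le k}B\cdot\lambda_i$ is $\bigl(B^{-1}B\bigr)\cdot\bigl(C\cup\bigcup_{i\le k}B\lambda_i\bigr)$, which is finite, so since $G$ is infinite we can pick $\lambda_{k+1}$ outside it. This gives, for every $n$, a set $\Lambda_n=\{\lambda_1,\dots,\lambda_n\}$ satisfying (iii) and (iv) with $|\Lambda_n|=n$.

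**Step 2: choose $n$ and then $F$.** Put $\Lambda=\Lambda_n$ for $n$ to be fixed, and set $F=C\cup B\cdot\Lambda_n$, which immediately satisfies (i) and (ii). Then $|F|\le|C|+|B|\cdot n$, so $\log_2(r|F|^2)+r\le\log_2\!\bigl(r(|C|+|B|n)^2\bigr)+r$, and the right-hand side grows like $2\log_2 n+O(1)$ as $n\to\infty$, whereas $|\Lambda|=n$ grows linearly. Hence for all sufficiently large $n$ we have $n\ge\log_2(r|F|^2)+r$, giving (v). Fixing such an $n$ completes the construction.

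**Main obstacle.** There is essentially no hard step here; the only place requiring a moment's care is the interaction between (ii)/(iv) and the choice of $F$ — one wants $F$ small enough that (v) survives, which is why it is important that $F$ can be taken to be exactly $C\cup B\cdot\Lambda$ rather than something larger, and that $|B\cdot\Lambda|\le|B|\,|\Lambda|$ so that $|F|$ is only linear in $|\Lambda|$. Since $\log_2$ of a polynomial in $|\Lambda|$ is eventually dominated by $|\Lambda|$ itself, (v) holds for $n$ large, and we are done. (The lemma is presumably stated with this particular $F$-dependence in (v) precisely because it will later be applied with $|F|$ controlling the size of a ``proprietary region'' and $|\Lambda|$ counting available marker sites, as in the outline of \S\ref{sec:outline}.)
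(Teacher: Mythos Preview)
Your proof is correct and follows essentially the same approach as the paper: greedily choose $\lambda_1,\dots,\lambda_n$ with pairwise disjoint $B$-translates avoiding $C$, set $F=C\cup B\Lambda$, and use that $\log_2\bigl(r(|C|+|B|n)^2\bigr)+r$ is sublinear in $n$ to secure (v) for large $n$. The only cosmetic difference is that the paper fixes $n$ first and then constructs $\lambda_1,\dots,\lambda_n$, whereas you build the infinite sequence first and then truncate; these are the same argument.
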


\begin{proof}
Pick $n \in \N$ satisfying
$$n \geq \log_2 \Big( r \cdot (|C| + n \cdot |B|)^2 \Big) + r.$$
Such an $n$ exists since the right-hand side is a sub-linear function of $n$. Now since $G$ is infinite and $B$ and $C$ are finite, we can find $n$ group elements $\lambda_1, \lambda_2, \ldots, \lambda_n \in G$ such that $B \cdot \lambda_i \cap B \cdot \lambda_j = \varnothing$ for all $i \neq j$ and $B \cdot \lambda_i \cap C = \varnothing$ for all $i$. Set
$$\Lambda = \{\lambda_1, \lambda_2, \ldots, \lambda_n\} \quad \text{ and } \quad F = C \cup B \cdot \Lambda.$$
Then properties (i) through (iv) are immediate, and (v) follows from our choice of $n$.
\end{proof}

\begin{lemma} \label{IND STEP}
Let $G$ be a countably infinite group and let $G \acts X$ be a free Borel action. Let $M, R \subseteq X$ be Borel sets and let $\phi : X \setminus (M \cup R) \rightarrow \{0, 1\}$ be a Borel function. Assume that $M$ and $R$ are disjoint, $M$ is syndetic, and $R$ is $\phi$-recognizable. Fix any $s \in G$ with $s\neq 1_G$. Then there are Borel sets $M', R' \subseteq X$ and a Borel function $\phi' : X \setminus (M' \cup R' \cup R) \rightarrow \{0, 1\}$ such that
\begin{enumerate}
\item[\rm (i)] $M'$ and $R'$ are disjoint subsets of $M$;
\item[\rm (ii)] $\phi'$ extends $\phi$;
\item[\rm (iii)] $M'$ and $R'$ are both syndetic and $R'$ is $\phi'$-recognizable;
\item[\rm (iv)] There is a finite $T\subseteq G$ such that for all $x\in X$ the partial functions $\wt{\phi '}(x)\res T$ and $\wt{\phi '}(s\cdot x)\res T$ are incompatible.
\end{enumerate}
\end{lemma}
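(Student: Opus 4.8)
The plan is to realize the inductive step sketched in \S\ref{sec:outline}. I would take $R':=\Delta$ and $M':=N$ for Borel sets $\Delta,N\subseteq M$ produced below, first extend $\phi$ to a Borel $\phi^*$ that pins down a recognizable syndetic set $\Delta$ while leaving a reserved Borel set $P\subseteq M$ undefined, and then obtain $\phi'$ from $\phi^*$ by a Borel colouring of $P$. For the set-up, use Proposition~\ref{prop:synd}.(1) to split $M=M_0\sqcup N$ with $M_0$ and $N$ both syndetic, fix finite $F_0$ with $F_0\cdot M_0=X$, and fix a finite $T_0$ witnessing that $R$ is $\phi$-recognizable; the piece $N$ is never modified, which is what makes $M':=N$ syndetic (needed so the lemma can be iterated). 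Then choose a finite $B\supseteq F_0^{-1}$ and a finite $C\ni 1_G$ with $C\supseteq F_0\cup T_0$ and with $C$ large enough to carry the ``beacon'' pattern of the next paragraph, pick $r$ large (how large depends only on $|F_0|,|B|,|C|$), and apply Lemma~\ref{COUNTING LEM} to $B,C,r$ to obtain finite $\Lambda\subseteq F$ with $C\subseteq F$, the translates $\{B\lambda\}_{\lambda\in\Lambda}$ pairwise disjoint and disjoint from $C$, and $|\Lambda|\geq\log_2(r|F|^2)+r$. Here $F$ is the common shape of the ``proprietary regions'', and the disjoint sub-translates $B\lambda\subseteq F$ carry the colouring.

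Next I would build $\Delta$ and $\phi^*$. Apply Lemma~\ref{lem:S} with $S=F$, $Y=M_0$ to get a maximal Borel $\Delta\subseteq M_0$ with $F\cdot\delta\cap F\cdot\delta'=\varnothing$ for distinct $\delta,\delta'\in\Delta$; since $C\subseteq F$ the regions $\{F\cdot\delta\}_{\delta\in\Delta}$ are then pairwise disjoint, and maximality together with $F_0\cdot M_0=X$ gives that, setting $W:=(F_0F^{-1}F)^{-1}$, every $x\in X$ has some $g\in W$ with $g\cdot x\in\Delta$. Now define $\phi^*\supseteq\phi$ by leaving it undefined on $R$, on $N$, on $\Delta$, and on $P:=M_0\cap(B\Lambda)\cdot\Delta$, and on the remaining points of $M_0$ setting the values so that, around every $\delta\in\Delta$, the $M_0$-points inside $C\cdot\delta$ display a fixed beacon pattern: a dense block of $1$'s on a suitably asymmetric subshape of $C$, surrounded by enough $0$'s to fix its centre. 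Implementing this so that $\Delta$ becomes recognizable --- i.e.\ so that for a suitable finite $T_1\supseteq C$ the partial function $\wt{\phi^*}(y)\res T_1$ is incompatible with $\wt{\phi^*}(\delta)\res T_1$ for all $\delta\in\Delta$, $y\notin\Delta$ --- is the density idea of \S\ref{sec:outline}, and uses that $\Delta$ is $F$-separated with $C\subseteq F$; then $\Delta$ is $\phi^*$-recognizable, hence $\phi'$-recognizable for every extension $\phi'$ of $\phi^*$ by the remark after Definition~\ref{def:recognizable}.

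For the colouring, the ``closeness'' graph $\mc{H}=\{(\delta,\delta')\in\Delta\times\Delta:\delta\ne\delta'\text{ and }\delta'\in(WsW^{-1}\cup Ws^{-1}W^{-1})\cdot\delta\}$ is Borel of degree at most $2|W|^2$, which is polynomial in $|F|$, whereas $2^{|\Lambda|}\geq r|F|^2\cdot 2^r$; since $|F|$ is at most linear in $r$ by the proof of Lemma~\ref{COUNTING LEM}, for $r$ large we get $2^{|\Lambda|}>2|W|^2$, and this polynomial-versus-exponential comparison is what breaks the apparent circularity between the scale $F$ and the syndeticity constant $W$. So Lemma~\ref{lem:color} yields a Borel colouring $\kappa:\Delta\to 2^\Lambda$ of $\mc{H}$, and I would set $\phi'=\kappa(\delta)(\lambda)$ on $M_0\cap(B\lambda)\cdot\delta$ for each $\delta\in\Delta$, $\lambda\in\Lambda$ (nonempty since $B\supseteq F_0^{-1}$), and $\phi'=\phi^*$ elsewhere. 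Then $\phi'$ is Borel, extends $\phi$, has domain $X\setminus(R\cup N\cup\Delta)=X\setminus(M'\cup R'\cup R)$, and $M'=N$, $R'=\Delta$ are disjoint subsets of $M$, both syndetic, with $R'$ being $\phi'$-recognizable; this gives (i)--(iii). For (iv), put $T:=W^{-1}T_1\cup W^{-1}F^{-1}$; given $x$, pick $g\in W$ with $\delta:=g\cdot x\in\Delta$, and note $\delta\ne gs\cdot x$ by freeness. If $gs\cdot x\notin\Delta$, recognizability of $\Delta$ via $T_1$ gives a coordinate in $W^{-1}T_1$ at which $\wt{\phi'}(x)\res T$ and $\wt{\phi'}(sx)\res T$ disagree; if $\delta':=gs\cdot x\in\Delta$, then $\delta'=(gsg^{-1})\cdot\delta\in WsW^{-1}\cdot\delta$, so $\kappa(\delta)\ne\kappa(\delta')$, and choosing $\lambda$ with $\kappa(\delta)(\lambda)\ne\kappa(\delta')(\lambda)$ and $f\in B\lambda$ with $fg\cdot x\in M_0$ and $fgs\cdot x\in M_0$ yields disagreement at the coordinate $g^{-1}f^{-1}\in W^{-1}F^{-1}$.

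The genuinely hard part is the construction of $\phi^*$, and it is hard in two linked ways. First, producing a Borel syndetic $\phi^*$-recognizable $\Delta$ by such a density/beacon pattern while keeping the reserved region $P$ entirely free --- despite having no control over $\phi^*$ at points outside $M_0$ --- is exactly the Chapter~10 machinery of \cite{GJS12}, and is long and technical. Second --- the point used silently in the last line of the verification --- one must arrange that for every ``close'' pair $\delta,\delta'$ and every relevant $\lambda$ there is $f\in B\lambda$ with $fg\cdot x$ and $fgs\cdot x$ both in $M_0$, equivalently $\{f\in B\lambda:f\cdot\delta\in M_0\}\cap\{f\in B\lambda:f\cdot\delta'\in M_0\}\ne\varnothing$, so that the colour bit $\kappa(\delta)(\lambda)$ is readable from $\wt{\phi'}(x)$ and from $\wt{\phi'}(sx)$ at the \emph{same} $G$-coordinate. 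Since the position of $M_0$ inside a translate depends on the translate, ensuring this forces a coordinated choice of $B$, $F$, the beacon shape $C$, and the splitting $M=M_0\sqcup N$; the counting in the first step then only has to supply enough colours --- polynomially many ``close'' neighbours against exponentially many available labellings, with $|F|$ linear in $r$.
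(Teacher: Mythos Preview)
Your architecture --- build a syndetic recognizable $\Delta$, colour it along a bounded-degree closeness graph, then read off (iv) --- matches the paper's, but the two difficulties you flag at the end are not merely technical: as stated, neither closes. For the colour-readability problem, you encode $\kappa(\delta)(\lambda)$ as the common value of $\phi'$ on $M_0\cap(B\lambda)\cdot\delta$ and then need a single $f\in B\lambda$ with $f\cdot\delta,\,f\cdot\delta'\in M_0$. There is a genuine circularity: $B$ is an input to Lemma~\ref{COUNTING LEM} and so is fixed before $F$, $W$, and the closeness window $WsW^{-1}$; for an arbitrary syndetic $M_0$ no finite $B$ can guarantee such a common point for every pair $(\delta,\delta')$ with $\delta'\in WsW^{-1}\cdot\delta$, and you cannot defer $B$ past $F$ without losing the counting lemma. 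Worse, $M_0$ is not $\phi'$-recognizable, so even when a carrier coordinate exists you cannot locate it from $\wt{\phi'}$. The paper sidesteps this with a different encoding: it reserves exactly one undefined $M$-point in each window $A\lambda_i\cdot\delta$ and sets it so that the \emph{parity} $c_{\phi'}(\lambda_i\cdot\delta)\bmod 2$ equals the $i$-th bit of $\kappa(\delta)$. If the parities at $\lambda_i\cdot\delta$ and $\lambda_i\cdot\delta'$ differ, then there is an $a\in A$ at which either $R$-membership differs --- handled by the fixed witness $T_R$ for the $\phi$-recognizability of $R$ --- or the $\phi'$-values differ directly; in either case one obtains incompatibility at a coordinate in $(T_R^{-1}A\Lambda\cup A\Lambda)\cdot W^{-1}$. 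This parity-plus-$T_R$ device is the missing idea; it removes any need to locate $M_0$ or to find a common $M_0$-coordinate.

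The recognizability of $\Delta$ has a matching gap. Your beacon is placed only on $M_0$-points in $C\cdot\delta$, but the $\phi$-values on $C\cdot\delta\setminus M$ are an uncontrolled background that could itself realize any ``density'' pattern, and $M_0$ is invisible to $\phi^*$, so a reader of $\wt{\phi^*}$ cannot tell which coordinates carry beacon bits. The paper's mechanism is accordingly more elaborate than a fixed pattern: it counts $1$'s on a fixed window $A$ via $c_\phi$, stratifies $X$ by $c_\phi$-level, and chooses $D_m$ inside level $N-m$ only after clearing a large buffer $B^{3m+1}F^{-1}FB$ around $\bigcup_{t<m}D_t$; it then raises $c$ by exactly $2$ at the chosen points, breaks ties within each $B\cdot d$ by a verification function $v:B\times B\to G$ recording pairwise disagreements on disjoint windows, and finally encodes (again by parity in $\Lambda$-windows) which $b\in B$ realizes $\Delta_m\ni p(d)=b\cdot d$, so that $\Delta_m$ and $D_m$ are interchangeably recognizable. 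The inductive recognizability of $\Delta_m$ uses all of this --- particularly the buffer spacing of $D_m$ from earlier strata --- and is not captured by a fixed beacon on $M_0$. (A minor structural difference: the paper does not take $R'=\Delta$, $M'=N$ but rather carves $R'$ and $M'$ out of two reserved $A\lambda_i$-windows of each $\delta\in\Delta$; your choice is legitimate in principle, but it spends half of $M$ on $N$ before the delicate recognizability construction even begins.)
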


\begin{proof}
The most difficult part of this proof is to extend $\phi$ in order to build recognizable syndetic subsets of $X$. As we know nothing of $\phi$ aside from its domain and the recognizability of $R$, which may be empty, $\phi$ is essentially a noisy background to which we must somehow add some recognizability. This involves several steps of coding techniques. The first step involves a crude process of counting the number of $1$'s which appear in certain regions. Specifically, since $M$ is syndetic, there is a finite set $A \subseteq G$ so that for every $x \in X$ we have $|A \cdot x \cap M| \geq 2$. For any Borel $Y \subseteq X$ and any Borel function $\theta : Y \rightarrow \{0, 1\}$ define the counting function $c_\theta$ by
\begin{equation}\label{eqn:c}
c_\theta (x) = |\{a \in A \: a \cdot x \in \dom(\theta )\setminus R \text{ and } \theta (a \cdot x) = 1 \} |.
\end{equation}
Note that if $R$ is $\theta$-recognizable and if $X_0\subseteq X$ is any $\theta$-recognizable set with $A\cdot X_0 \subseteq \dom (\theta ) \cup R$, then the set $\{x \in X_0 \csuchthat a \cdot x \not\in R \text{ and } \theta(a \cdot x) = i\}$ is $\theta$-recognizable for $i = 0, 1$ and $a \in A$. Therefore by Lemma \ref{lem:algebra} the set $\{ x\in X_0 \csuchthat c_\theta (x) = i \}$ is $\theta$-recognizable for all $i\in \N$.

Set $N = |A|-2$ and note that $c_\phi(x) \leq N$ for all $x \in X$. We will soon carefully add in $1$'s at select locations with the intention of creating local maximums for the counting function $c$. If we add in some $1$'s in $A \cdot x$, then these new $1$'s will be visible from $A^{-1} A \cdot x$. We therefore use $B = A^{-1} A$ as a buffer region and we will frequently require that points $x, y \in X$ have disjoint $B$-regions, meaning $B \cdot x \cap B \cdot y = \varnothing$. A fact which we will use repeatedly is that $B = B^{-1}$.

We will soon add in values of $1$ at select locations in order to create local maximums for the counting function $c$, but we must first decide how far apart we want these local maximums to be. We will need a verification set $V \subseteq G$ and a verification function $v: B \times B \rightarrow G$ whose significance will become clear later. Let $v : B \times B \rightarrow G$ be any function satisfying the following for all $(b_1, b_2), (b_3, b_4) \in B \times B$:
$$v(b_1, b_2) = v(b_2, b_1);$$
$$B \cdot v(b_1, b_2) \cdot b_1 \cap B = \varnothing;$$
$$(b_1, b_2) \neq (b_3, b_4) \Longrightarrow B \cdot v(b_1, b_2) \cdot b_1 \cap B \cdot v(b_3, b_4) \cdot b_3 = \varnothing.$$
Such a function $v$ exists since $B$ is finite and $G$ is infinite. Set
$$V = \bigcup_{(b_1, b_2) \in B \times B} B \cdot v(b_1, b_2).$$
Now pick finite sets $\Lambda \subseteq F \subseteq G$ such that
\begin{enumerate}
\item[\rm (a)] $B^3 \cup V B \subseteq F$;
\item[\rm (b)] $B \Lambda \subseteq F$;
\item[\rm (c)] $B \cdot \lambda \cap B \cdot \lambda' = \varnothing$ for all $\lambda \neq \lambda' \in \Lambda$;
\item[\rm (d)] $B \Lambda B \cap (B \cup V B) = \varnothing$;
\item[\rm (e)] $|\Lambda| \geq \log_2(2 |B|^{3N+3} |F|^2 + 1) + 2 \log_2(|B|) + 4$.
\end{enumerate}
Such sets $\Lambda, F \subseteq G$ exist by Lemma \ref{COUNTING LEM}.

Now we decide on the locations where we will create local maximums for the counting function $c$. In choosing such locations, we wish to favor locations $x$ where $c_\phi(x)$ is already large. Apply Lemma \ref{lem:S} to obtain a maximal Borel subset $D_0$ of $\{x \in X \: c_\phi(x) = N\}$ having the property that $F B \cdot d \cap F B \cdot d' = \varnothing$ for all $d \neq d' \in D_0$. Next, let $D_1$ be a maximal Borel subset of
$$\{x \in X \: c_\phi(x) = N - 1\} \setminus B^4 F^{-1} F B \cdot D_0$$
having the property that $F B \cdot d \cap F B \cdot d' = \varnothing$ for all $d \neq d' \in D_1$. In general, once $D_0$ through $D_{m-1}$ have been defined with $m \leq N$, let $D_m$ be a maximal Borel subset of
$$\{x \in X \: c_\phi(x) = N - m\} \setminus \bigcup_{i=0}^{m-1} B^{3m+1} F^{-1} F B \cdot D_i$$
having the property that $F B \cdot d \cap F B \cdot d' = \varnothing$ for all $d \neq d' \in D_m$. This defines $D_0, D_1, \ldots, D_N$. Set $D = \bigcup_{0 \leq i \leq N} D_i$. We point out a few important properties of $D$.

(1). Let $x \in X$ and suppose that $c_\phi(x) = N - m$. Then by the maximal property of $D_m$ either $F B \cdot x \cap F B \cdot D_m \neq \varnothing$ or else
$$x \in \bigcup_{i=0}^{m-1} B^{3m+1} F^{-1} F B \cdot D_i.$$
In either case we have
$$x \in \bigcup_{i=0}^m B^{3m+1} F^{-1} F B \cdot D_i.$$

(2). For every $x \in X$ there is $0 \leq m \leq N$ with $c_\phi(x) = N - m$. Therefore from (1) it follows that
$$X \subseteq B^{3N+1} F^{-1} F B \cdot D.$$
In particular, $D$ is syndetic.

(3). For $0 \leq m \leq N$, $d \in D_m$, and $x \in B^3 \cdot d$, we have $c_\phi(x) \leq c_\phi(d) = N - m$. This says that each point $d \in D_m$ achieves a local maximum for the function $c_\phi$ in the region $B^3 \cdot d$. We prove this claim by contradiction. Towards a contradiction, suppose that $t < m$ and $c_\phi(x) = N - t$. Then by (1)
$$x \in \bigcup_{i=0}^t B^{3t+1} F^{-1} F B \cdot D_i \subseteq \bigcup_{i=0}^{m-1} B^{3m-2} F^{-1} F B \cdot D_i.$$
Therefore
$$d \in B^3 \cdot x \subseteq \bigcup_{i=0}^{m-1} B^{3m+1} F^{-1} F B \cdot D_i,$$
which contradicts the definition of $D_m$ and the fact that $d \in D_m$.

We will now extend $\phi$ to $\phi_1$. The purpose of $\phi_1$ is to place extra $1$'s near the select locations $D \subseteq X$. We define $\phi_1$ to be an extension of $\phi$ with
$$\dom(\phi_1) = \dom(\phi) \cup (M \cap B \cdot D)$$
and with the property that for every $d \in D$ all elements of $M \cap B \cdot d$ are assigned the value $0$ except for precisely $2$ elements in $M \cap A \cdot d$ which are assigned the value $1$. Such a function $\phi_1$ exists since $B \cdot d \cap B \cdot d' = \varnothing$ for all $d \neq d' \in D$, $A \subseteq B$, and $|M \cap A \cdot d| \geq 2$ for all $d \in D$. Observe that for $x \in X$
\begin{align*}
& c_\phi(x) \leq c_{\phi_1}(x) \leq c_\phi(x) + 2,\\
\text{and} \quad & c_{\phi_1}(x) > c_\phi(x) \Longrightarrow x \in B \cdot D.
\end{align*}

The function $\phi_1$ has the nice property that for $d \in D_m$ we have $c_{\phi_1}(d) = N - m + 2$, for $x\in B\cdot d$ we have $c_{\phi_1}(x) \leq N - m + 2$, and for $y\in B^3\cdot d\setminus B\cdot d$ we have $c_{\phi_1}(y) \leq N - m$. We want $D$, or at least a set close to $D$, to become recognizable for some extension of $\phi_1$. Creating local maximums for the counting function $c$ was a crude first attempt, but a problem with $\phi_1$ is that there may be $d \in D_m$ and $d \neq x \in B \cdot d$ with $c_{\phi_1}(x) = c_{\phi_1}(d) = N - m + 2$. So in terms of locally maximizing $c_{\phi_1}$, $x$ and $d$ are in a tie. So we now introduce a tie-breaker by using the verification function $v$ and the verification set $V$. We extend $\phi_1$ to $\phi_2$ where $\phi_2$ has domain
$$\dom(\phi_2) = \dom(\phi_1) \cup \Big(M \bigcap A \cdot \{v(b_1, b_2) \cdot b_1 \: b_1 \neq b_2 \in B\} \cdot D \Big).$$
We require for each $d \in D$ and each $b_1 \neq b_2 \in B$ that $\phi_2$ have distinct behavior on the two regions $A \cdot v(b_1, b_2) b_1 \cdot d$ and $A \cdot v(b_1, b_2) \cdot b_2 \cdot d$. Specifically, for each $d \in D$ and each $b_1 \neq b_2 \in B$ we require that there be $a \in A$ such that either
$$\chi_R \big(a \cdot v(b_1, b_2) \cdot b_1 \cdot d \big) \neq \chi_R \big(a \cdot v(b_1, b_2) \cdot b_2 \cdot d \big),$$
where $\chi_R$ is the characteristic function of $R$, or else both $a \cdot v(b_1, b_2) \cdot b_1 \cdot d$ and $a \cdot v(b_1, b_2) \cdot b_2 \cdot d$ are in the domain of $\phi_2$ and
$$\phi_2 \big(a \cdot v(b_1, b_2) \cdot b_1 \cdot d \big) \neq \phi_2 \big(a \cdot v(b_1, b_2) \cdot b_2 \cdot d \big).$$
We further require that this be achieved while creating very few new $1$'s, meaning that for each $d \in D$ and $b_1 \neq b_2 \in B$
\begin{align*}
 & \ c_{\phi_2} \big(v(b_1, b_2) b_1 \cdot d \big) + c_{\phi_2} \big(v(b_1, b_2) b_2 \cdot d \big)\\
\leq & \ c_{\phi_1} \big(v(b_1, b_2) b_1 \cdot d \big) + c_{\phi_1} \big(v(b_1, b_2) b_2 \cdot d \big) + 1.
\end{align*}
We point out that for $b_1, b_2 \in B$ we have $A \cdot v(b_1, b_2) \cdot b_1 \subseteq V B \subseteq F$, and since $F \cdot d \cap F \cdot d' = \varnothing$ for each $d \neq d' \in D$, achieving these conditions is an independent local requirement for each $d \in D$. So if there is any such function $\phi_2$ then it can certainly be chosen to be Borel. By the definition of $v$, for every $d \in D$ and $b_1, b_2 \in B$ we have that $A \cdot v(b_1, b_2) \cdot b_1 \cdot d \cap B \cdot D = \varnothing$, so $\phi_1$ and $\phi$ are identical on $A \cdot v(b_1, b_2) \cdot b_1 \cdot d$ and hence
$$|A \cdot v(b_1, b_2) \cdot b_1 \cdot d \cap M \cap (X \setminus \dom(\phi_1))| = |A \cdot v(b_1, b_2) \cdot b_1 \cdot d \cap M| \geq 2.$$
Furthermore, for every $d \in D$ and $b_1, b_2, b_3, b_4 \in B$ the definition of $v$ implies that
$$\{b_1, b_2\} \neq \{b_3, b_4\} \Longrightarrow B \cdot v(b_1, b_2) \cdot b_1 \cdot d \cap B \cdot v(b_3, b_4) \cdot b_3 \cdot d = \varnothing.$$
Therefore one can achieve the conditions for $\phi_2$ by considering each $d \in D$ and each un-ordered pair $\{b_1, b_2\} \subseteq B$, $b_1 \neq b_2$, one at a time. For $d \in D$ and $b_1 \neq b_2 \in B$ we can find $a \in A$ with
$$a \cdot v(b_1, b_2) \cdot b_1 \cdot d \in M \cap (X \setminus \dom(\phi_1)).$$
Then $a \cdot v(b_1, b_2) \cdot b_1 \cdot d \not\in R$ since $M$ and $R$ are disjoint. If $a \cdot v(b_1, b_2) \cdot b_2 \cdot d \in R$ then we are done, and otherwise we can assume that $\phi_2(a \cdot v(b_1, b_2) \cdot b_2 \cdot d)$ is defined and set
$$\phi_2 \big( a \cdot v(b_1, b_2) \cdot b_1 \cdot d \big) = 1 - \phi_2 \big( a \cdot v(b_1, b_2) \cdot b_2 \cdot d \big).$$
We conclude that such a function $\phi_2$ exists, and that it can be chosen to be Borel. We note that $\phi_2$ satisfies the following for every $x \in X$:
$$c_\phi(x) \leq c_{\phi_2}(x) \leq c_\phi(x) + 2;$$
$$c_{\phi_2}(x) > c_\phi(x) \Longrightarrow x \in \left(B \cup \bigcup_{b_1 \neq b_2 \in B} B \cdot v(b_1, b_2) \cdot b_1 \right) \cdot D;$$
$$\text{and} \quad c_{\phi_2}(x) > c_\phi(x) + 1 \Longrightarrow x \in B \cdot D.$$

Now to complete the role of the verification set $V$, we extend $\phi_2$ to $\phi_3$ where
$$\dom(\phi_3) = \dom(\phi_2) \cup (M \cap V B \cdot D)$$
and for all $x \in \dom(\phi_3) \setminus \dom(\phi_2)$ we set $\phi_3(x) = 0$. Since we only added in new $0$'s, $\phi_3$ has all of the properties of $\phi_2$ listed above. We can now describe the tie-breaking procedure referred to earlier. For $Y \subseteq X$ and a function $\theta : Y \rightarrow \{0, 1\}$ which recognizes $R$, we associate to each $x \in X$ with $V \cdot x \subseteq Y \cup R$ the function $L_\theta (x) \in 3^V$ given by
$$L_\theta(x)(w) = \begin{cases}
2 & \text{if } w \cdot x \in R\\
\theta(w \cdot x) & \text{otherwise},
\end{cases}$$
i.e., $\dom (L_\theta ) = \{ x\in X\csuchthat V\cdot x \subseteq Y\cup R \}$, and for $w\in V$ we have $L_\theta (x)(w) = 2$ whenever $w \cdot x \in R$ and $L_\theta(x)(w) = \wt{\theta}(x)(w^{-1})$ otherwise. Note that if $X_0\subseteq X$ is any $\theta$-recognizable set with $V\cdot X_0 \subseteq Y\cup R$, then, since $\theta$ recognizes $R$, for each $w \in V$ and $i \in \{0, 1, 2\}$ the set
$$\{x \in X_0 \: L_\theta(x)(w) = i\}$$
is $\theta$-recognizable. We will work with extensions $\theta$ of $\phi_3$ so that $R$ will be $\theta$-recognizable automatically. The definition of $\phi_2$ guarantees that if $d \in D$ and $b_1 \cdot d \neq b_2 \cdot d \in B \cdot d$ then $L_\theta(b_1 \cdot d) \neq L_\theta(b_2 \cdot d)$, specifically
$$\exists a \in A \quad L_\theta(b_1 \cdot d) \big( a \cdot v(b_1, b_2) \big) \neq L_\theta(b_2 \cdot d) \big( a \cdot v(b_1, b_2) \big).$$
So if we fix a total ordering, denoted $\preceq$, of $3^V$ then we can pair each $d \in D$ with a unique element $p(d) = \delta \in B \cdot d$ as follows. For $d \in D_m$ we define $p(d) = b \cdot d$ where $b$ is the unique element of
$$S = \{b' \in B \: c_{\phi_3}(b' \cdot d) = N - m + 2\}$$
with $L_{\phi_3}(b \cdot d) \succeq L_{\phi_3}(b' \cdot d)$ for all $b' \in S$. The definition of $\phi_2$ guarantees that there is a unique $b$ satisfying this condition. We define $\Delta = p(D)$ and $\Delta_m = p(D_m)$ for $0 \leq m \leq N$. Note that since $F B \cdot d \cap F B \cdot d' = \varnothing$ for all $d \neq d' \in D$ it follows that $F \cdot \delta \cap F \cdot \delta' = \varnothing$ for all $\delta \neq \delta' \in \Delta$.

The Borel set $\Delta \subseteq X$ will play an important role in the remainder of this proof. This set is not necessarily $\phi_3$-recognizable, but we will soon make it recognizable for an extension of $\phi_3$. Before doing so we first drastically reduce the number of points in $X \setminus R$ which do not have an assigned value. Recall from earlier the set $\Lambda \subseteq F$, which satisfies properties (a) through (e). Enumerate $\Lambda$ as $\lambda_1, \lambda_2, \ldots, \lambda_\ell$. Let $K$ be the least integer greater than $\log_2(|B|)$. Note that $\ell - 2 K - 2 \geq \log_2(2 |B|^{3N+3} |F|^2 + 1)$.
Since $\phi_3$ and $\phi$ agree on $X \setminus (V B \cup B) \cdot D$, property (d) implies that for each $\delta \in \Delta$ and $1 \leq i \leq \ell$
$$|(X \setminus \dom(\phi_3)) \cap M \cap A \cdot \lambda_i \cdot \delta| = |M \cap A \cdot \lambda_i \cdot \delta| \geq 2.$$
We let $\phi_4$ be the Borel function which extends $\phi_3$ and satisfies:
$$X \setminus (R \cup A \cdot \Lambda \cdot \Delta) \subseteq \dom(\phi_4);$$
$$\forall \delta \in \Delta \ \forall 1 \leq i \leq \ell \quad |M \cap (X \setminus \dom(\phi_4)) \cap A \cdot \lambda_i \cdot \delta| = 1;$$
$$\text{and} \quad \forall x \in \dom(\phi_4) \setminus \dom(\phi_3) \quad \phi_4(x) = 0.$$
It follows from this definition, the properties of $\phi_3$, and properties (b) and (c) that for every $x\in X$:
\begin{equation}\label{eqn:line1}
|\{ a\in A \csuchthat a\cdot x \in M\setminus \dom (\phi _4) \} | \leq 1;
\end{equation}
\begin{equation}\label{eqn:line2}
\{ a\in A \csuchthat a\cdot x \in M\setminus \dom (\phi _4) \} \neq \emptyset \Longrightarrow c_{\phi _4}(x) = c_{\phi}(x) \text{ and }x\in B\Lambda \cdot \Delta ;
\end{equation}
$$c_\phi(x) \leq c_{\phi _4}(x) \leq c_\phi(x) + 2;$$
$$c_{\phi _4} (x) > c_\phi(x) \Longrightarrow x \in (B \cup V B) \cdot D;$$
$$c_{\phi _4}(x) > c_\phi(x) + 1 \Longrightarrow x \in B \cdot D$$
We have previously used two coding techniques -- creating local maximums in the counting function $c$, and using the verification set $V$ as a tie-breaker. We now employ a third technique which involves, for each $d \in D$ and $\delta = p(d) \in \Delta$, coding the element $b \in B$ satisfying $\delta = b \cdot d$. This is the final step in making $\Delta$ recognizable. It is true that $\Delta_0$ is $\phi_4$-recognizable since any $x \in X$ satisfying $c_{\phi_4}(x) = N + 2$ must lie in $B \cdot D_0$. However, for $0 < m \leq N$ the set $\Delta_m$ may not yet be $\phi_4$-recognizable since there are likely many points $x$ not lying in $B \cdot D_m$ which satisfy $c_{\phi_4}(x) = N - m + 2$. The key fact which we must use is that $D_m$ is carefully spaced from $D_t$ for $t < m$, and to use this information we must be able to backtrack from each $\delta \in \Delta$ to the $d \in D$ with $p(d) = \delta$. This is where our next coding technique comes in. For $i, k \in \N$ let $\bB_i(k) \in \{0, 1\}$ be the $i^\text{th}$ digit in the binary representation of $k$ (where $\bB_i(k) = 0$ when $2^{i-1}>k$). Fix an injective function $r : B \rightarrow \{0, 1, \ldots, 2^K - 1\}$. We extend $\phi_4$ to $\phi_5$ so that
$$\dom(\phi_5) = \dom(\phi_4) \cup (M \cap A \{\lambda_1, \lambda_2, \ldots, \lambda_K\} \cdot \Delta)$$
and for every $\delta \in \Delta$ and $1 \leq i \leq K$
$$c_{\phi_5}(\lambda_i \cdot \delta) \equiv \bB_i(r(b)) \mod 2$$
where $b^{-1} \cdot \delta = d \in D$ (or equivalently $\delta = p(d) = b \cdot d$).

We now formally check that the coding of the previous paragraph works, in the sense that, for every $0 \leq m \leq N$, $\Delta_m$ is $\phi_5$-recognizable if and only if $D_m$ is $\phi_5$-recognizable. Fix $0 \leq m \leq N$, and first suppose that $\Delta_m$ is $\phi _5$-recognizable. Then for each $1\leq i\leq K$ and $j\in \N$ the set $\{ y\in \Delta _m \csuchthat c_{\phi _5} (\lambda _i \cdot y ) = j \}$ is $\phi _5$-recognizable (since $(A\{ \lambda _1,\lambda _2,\dots ,\lambda _K\}\cdot \Delta_m )\subseteq \dom (\phi _5)\cup R$; see the remark immediately following (\ref{eqn:c})). To show that $D_m$ is $\phi _5$-recognizable it therefore suffices to show that for $x \in X$, $x \in D_m$ if and only if
\begin{quote}
there is some $b \in B$ with $b \cdot x \in \Delta_m$ such that for every $1 \leq i \leq K$, $$c_{\phi _5} (\lambda_i b \cdot x) \equiv \bB_i(r(b)) \mod 2.$$
\end{quote}
Clearly the above condition holds whenever $x \in D_m$. So suppose that $x \in X$ satisfies the stated property. Let $b \in B$ be such that $b \cdot x = \delta \in \Delta_m$, let $d \in D_m$ be such that $p(d) = \delta$, and let $b' \in B$ be such that $\delta = b' \cdot d$. Then the definition of $\phi_5$ together with the assumptions on $x$ imply that $\bB_i(r(b)) \equiv c_{\phi _5}(\lambda_i \cdot \delta) \equiv \bB_i(r(b')) \mod 2$ for all $1\leq i \leq K$. Since $r$ is injective and $K > \log_2(|B|)$ we obtain $b = b'$ and hence $x = b^{-1} \cdot \delta = d \in D_m$.

Now suppose that $D_m$ is $\phi _5$-recognizable. Then $\{ x\in B\cdot D_m \csuchthat c_{\phi _5}(x)= N-m+2 \}$ is $\phi _5$-recognizable, and moreover this set is contained in $\dom (L_{\phi _5})$. Therefore, $\phi _5$-recognizability of $\Delta _m$ will follow once we show that for $x \in X$, $x \in \Delta_m$ if and only if
\begin{quote}
$c_{\phi _5}(x) = N - m + 2$ and there is some $b\in B$ with $b\cdot x \in D_m$ such that for all $y \in B b \cdot x$, if $c_{\phi _5} (y) = N - m + 2$ then $L_{\phi _5}(y)\preceq L_{\phi _5}(x)$.
\end{quote}
The definition of $\Delta_m$ implies that the above conditions hold whenever $x \in \Delta_m$. So suppose that $x \in X$ satisfies the above condition, and let $b \in B$ be as described in the condition. Then $b \cdot x = d \in D_m$. Set $\delta = p(d) \in \Delta_m$. By the definition of the function $p$, we have that $L_{\phi _5}(x)\preceq L_{\phi _5}(\delta)$ and $c_{\phi _5}(\delta) = N - m + 2$. However, $\delta \in B \cdot d = B b \cdot x$, so the assumption on $x$ implies that $L_{\phi _5}(\delta ) \preceq L_{\phi _5}(x)$. The construction of $\phi_2$ guarantees that $L_{\phi _5}(z) \neq L_{\phi _5}(z')$ for all $z \neq z' \in B \cdot d$, and since $\preceq$ is a total ordering and $x, \delta \in B \cdot d$ we conclude that $x = \delta \in \Delta_m$.

In a moment we will verify that $\Delta$ is $\phi_5$-recognizable, but first we prove the following important claim:
\begin{quote}
$(\star )$ There is a finite set $T \subseteq G$ so that for all $x, y \in X$, if $c_{\phi_5}(x) = c_\phi(x) + 2$, $c_{\phi_5}(y) \leq c_\phi(y) + 1$, and $c_\phi(y) \leq c_\phi(x)$ then $\wt{\phi_5}(x) \res T$ and $\wt{\phi_5}(y) \res T$ are incompatible.
\end{quote}
Let $T_R$ witness that $R$ is $\phi_5$-recognizable, and set $T = A^{-1} \cup A^{-1} T_R$. Fix $x, y \in X$ satisfying the stated assumptions. If there is $a \in A$ such that $R$ contains precisely one of $a \cdot x$ and $a \cdot y$ then we are done. So we may suppose that for every $a \in A$, $a \cdot x \in R$ iff $a \cdot y \in R$. Set $A_R = \{a \in A \csuchthat a \cdot x \in R\}$. Since $c_{\phi_5}(x) = c_\phi(x) + 2$, we have that $x \in B \cdot D$ and thus $A \cdot x \cap A \Lambda \cdot \Delta = \varnothing$ by property (d). So it follows from the definition of $\phi_4$ that $A \cdot x \subseteq \dom(\phi_5) \cup R$. Therefore
$$c_{\phi_5}(x) = |\{a \in A \setminus A_r \csuchthat \phi_5(a \cdot x) = 1\}|$$
while
$$c_{\phi_5}(y) = |\{a \in A \setminus A_r \csuchthat a \cdot y \in \dom(\phi_5) \text{ and } \phi_5(a \cdot y) = 1\}|.$$
If $(A \setminus A_R) \cdot y \subseteq \dom(\phi_5)$ then we are done since $c_{\phi_5}(y) < c_{\phi_5}(x)$. On the other hand, if $(A \setminus A_r) \cdot y \not\subseteq \dom(\phi_5)$ then by (\ref{eqn:line1}) we must have that $\phi_5 \res A \cdot y = \phi_4 \res A \cdot y$. Furthermore by (\ref{eqn:line2}) we have $c_{\phi_5}(y) = c_{\phi_4}(y) = c_\phi(y)$ and hence
$$c_{\phi_5}(y) = c_\phi(y) \leq c_\phi(x) = c_{\phi_5}(x) - 2.$$
Now $c_{\phi_5}(y) \leq c_{\phi_5}(x) - 2$ and (\ref{eqn:line1}) together imply that there is $a \in A \setminus A_R$ with $a \cdot y \in \dom(\phi_5)$ and $\phi_5(a \cdot y) \neq \phi_5(a \cdot x)$. This completes the proof of $(\star )$.

We can now use induction on $0 \leq m \leq N$ and the spacing conditions used in the definition of the $D_m$'s to show that each $\Delta_m$ is $\phi_5$-recognizable. We begin with $\Delta_0$. Observe that the set $\{ x\in X \csuchthat c_{\phi _5}(x) = N+2\}$ is $\phi _5$-recognizable by $(\star )$. 
To show that $\Delta _0$ is $\phi _5$-recognizable it therefore suffices to show that for $x\in X$, $x \in \Delta_0$ if and only if
\begin{quote}
$c_{\phi _5}(x) = N + 2$ and for all $y \in B^2 \cdot x$, if $c_{\phi _5}(y) = N + 2$ then $L_{\phi _5}(y) \preceq L_{\phi _5}(x)$.
\end{quote}
First suppose that $x = \delta \in \Delta_0$. Then clearly $c_{\phi _5}(\delta) = N + 2$. Let $d \in D_0$ be such that $\delta = p(d) \in B \cdot d$. Then $B^2 \cdot \delta \subseteq B^3 \cdot d$. By construction we have that $B^3 \subseteq F$ and thus $B^3 \cdot d \cap B^3 \cdot d' = \varnothing$ for all $d' \in D$ with $d' \neq d$. Since every $y \in X$ with $c_{\phi _5}(y) = N + 2$ must lie in $B \cdot D_0$, any such $y$ in $B^2 \cdot \delta$ must lie in $B \cdot d$. Hence, the definition of $p$ gives $L_{\phi _5}(y) \preceq L_{\phi _5}(x)$ for all such $y$. Conversely, suppose that $x$ satisfies the stated condition. Then $c_{\phi _5}(x) = N + 2$ implies that $x \in B \cdot D_0$. Say $x \in B \cdot d$ with $d \in D_0$, and set $\delta = p(d) \in \Delta_0$. We have $x \in B \cdot d$ and thus the definition of $p$ gives $L_{\phi _5}(x) \preceq L_{\phi _5}(\delta )$. On the other hand, $\delta \in B \cdot d \subseteq B^2 \cdot x$ and $c_{\phi _5}(\delta) = N + 2$. So the assumption on $x$ implies that $L_{\phi _5}(\delta ) \preceq L_{\phi _5}(x)$. Now $x, \delta \in B \cdot d$ and the construction of $\phi_3$ guarantees that $L_{\phi _5}(z) \neq L_{\phi _5}(z')$ for $z \neq z' \in B \cdot d$, so we conclude $x = \delta \in \Delta_0$.

Now for the inductive step fix $0 < m \leq N$ and assume that $\Delta_t$ is $\phi _5$-recognizable for all $0\leq t < m$. Then $D_t$ is also $\phi_5$-recognizable for all $0\leq t < m$. Fix $x=\delta \in \Delta _m$ and $y\not\in \Delta _m$. Let $b \in B$ be such that $b \cdot \delta  =d \in D_m$, where $p(d) = \delta$. We note the following:
$$c_{\phi _5}(\delta ) = N - m + 2;$$
$$c_{\phi _5}(b \cdot \delta ) = N - m + 2;$$
$$b \cdot \delta \not\in \bigcup_{0 \leq t < m} B^{3m+1} F^{-1} F B \cdot D_t;$$
$$\text{and} \quad L_{\phi _5}(\delta ) \succeq L_{\phi _5}(z) \text{ for all } z \in B^2 \cdot \delta \text{ with } c_{\phi _5}(z) = N - m + 2.$$
The first and second lines follow from the construction. The definition of $D_m$ implies that $b \cdot \delta = d$ satisfies the condition in the third line. By property (3) we have that $c_\phi(z) \leq N - m$ for all $z \in B^2 \cdot \delta \subseteq B^3 \cdot d$. Therefore every $z \in B^2 \cdot \delta$ with $c_{\phi _5}(z) = N - m + 2$ must lie in $B \cdot D$, and since $B^3 \cdot d \cap B \cdot d' = \varnothing$ for $d \neq d' \in D$, each such $z$ must lie in $B \cdot d$. It follows from the definition of $p$ that $L_{\phi _5}(\delta) \succeq L_{\phi _5}(z)$ for all such $z$.

We will now consider finitely many cases, and show that in each case there is a finite subset of $G$ on which $\wt{\phi}_5(x)$ and $\wt{\phi}_5(y)$ are incompatible. If $b\cdot y\in \bigcup_{0 \leq t < m} B^{3m+1} F^{-1} F B \cdot D_t$ then we are done, since this set is $\phi _5$-recognizable by the induction hypothesis. So assume
\begin{equation}\label{eqn:assumption1}
b\cdot y \not\in \bigcup _{0\leq t <m}B^{3m+1} F^{-1} F B \cdot D_t.
\end{equation}
Then it follows from property (1) that for every $i<m$, $c_{\phi}(b\cdot y)\neq N-i$. So $c_\phi (b\cdot y)\leq N-m$. If $c_{\phi _5}(b\cdot y)\leq N-m+1$ then we are done by $(\star )$. So we may additionally assume that
\begin{equation}\label{eqn:assumption2}
c_{\phi _5}(b\cdot y)= N-m+2 .
\end{equation}
This implies $c_\phi (b\cdot y)=N-m$ and $b\cdot y\in B\cdot D_j$ for some $0\leq j\leq N$. Property (3) then implies that $N-m=c_{\phi}(b\cdot y)\leq N-j$, from which we obtain $j\leq m$, and (\ref{eqn:assumption1}) implies that $b\cdot y\not\in B\cdot D_t$ for any $t<m$. Thus $j=m$ and we may find some $d'\in D_m$ with $b\cdot y\in B\cdot d$. Then $y\in B^2\cdot d'$ and property (3) implies that $c_\phi (y) \leq N-m$. The case where $c_{\phi _5}(y)\leq N-m+1$ is again handled by $(\star )$, so we can assume from now on that
\begin{equation}\label{eqn:assumption3}
c_{\phi _5}(y)= N-m+2.
\end{equation}
Then $y\in B\cdot D$ and so $y\in B\cdot d'$. Set $\delta ' = p(d')\in \Delta _m$ and fix $b'\in B^2$ with $b'\cdot y = \delta ' \in \Delta _m$. By assumption, $y\not\in \Delta _m$, so we must have $L_{\phi _5}(b'\cdot y )\succneqq L_{\phi _5}(y)$. If $c_{\phi _5}(b'\cdot \delta ) = N-m+2$ then $L_{\phi _5}(b'\cdot \delta )\preceq L_{\phi _5}(x)$ by the properties we established for $\delta$ above, so we are done. The final possibility is that $c_{\phi _5}(b'\cdot \delta )\neq N-m+2$, in which case $c_{\phi _5}(b'\cdot \delta )\leq N-m+1$ and $c_{\phi}(b'\cdot \delta )\leq N-m$ by property (3), so we are done by $(\star )$ once again. This completes the proof that $\Delta$ is $\phi_5$-recognizable.

Now that we have constructed a syndetic recognizable set $\Delta$, the remainder of the proof becomes much simpler. We now define $R'$ and we will soon define $M'$ and $\phi'$. Define
$$R' = M \cap (X \setminus \dom(\phi_5)) \cap A \cdot \lambda_\ell \cdot \Delta.$$
Recall from the definitions of $\phi_4$ and $\phi_5$ that for every $\delta \in \Delta$ there is precisely one $a \in A$ with $a \lambda_\ell \cdot \delta \in M \cap (X \setminus \dom(\phi_5))$. It is clear from the definition that $R'$ is Borel, but $R'$ might not be $\phi_5$-recognizable. This is easily fixed. Let $\phi_6$ be the extension of $\phi_5$ with
$$\dom(\phi_6) = \dom(\phi_5) \cup \Big(M \bigcap A \cdot \{\lambda_{K+1}, \lambda_{K+2}, \ldots, \lambda_{2K}\} \cdot \Delta \Big)$$
and satisfying for each $\delta \in \Delta$ and $1 \leq i \leq K$
$$c_{\phi_6}(\lambda_{K+i} \cdot \delta) \equiv \bB_i(r(a)) \mod 2,$$
where $a \in A$ is such that $a \cdot \lambda_\ell \cdot \delta \in R'$. Then it is not difficult to see that $R'$ is $\phi_6$-recognizable, contained in $M$, and is syndetic since $D$ is syndetic (property (2) above) and $D \subseteq B \lambda_\ell^{-1} A^{-1} \cdot R'$.

Lastly, we define $\phi'$ and $M'$. Define
$$M' = M \cap (X \setminus \dom(\phi_6)) \cap A \cdot \lambda_{\ell-1} \cdot \Delta.$$
Then $M' \subseteq M$ is disjoint from $R'$ and is syndetic since $D \subseteq B \lambda_{\ell-1}^{-1} A^{-1} \cdot M'$. Let $1_G \neq s \in G$ be the group element from the statement of the proposition. Let $\mc{G}$ be the Borel graph with vertex set $\Delta$ and edge relation
$$(\delta, \delta') \in \mc{G} \Longleftrightarrow \exists h \in B^2 F F^{-1} B^{3N+1} \ \Big( \delta = h s h^{-1} \cdot \delta' \mbox{ or } \delta' = h s h^{-1} \cdot \delta \Big).$$
Each vertex of $\mc{G}$ has degree at most $2 |B|^{3N+3} |F|^2$, so we can apply Lemma \ref{lem:color} to obtain a proper vertex coloring $\kappa : \Delta \rightarrow \{0, 1, \ldots, 2 |B|^{3N+3} |F|^2\}$ of $\mc{G}$. We let $\phi' : X \setminus (M' \cup R') \rightarrow \{0, 1\}$ be the extension of $\phi_6$ which satisfies for every $\delta \in \Delta$ and $1 \leq i \leq \ell - 2K - 2$
$$c_{\phi'}(\lambda_{2K+i} \cdot \delta) \equiv \bB_i(\kappa (\delta)) \mod 2.$$
Fix $T_R\subseteq G$ finite witnessing that $R$ is $\phi$-recognizable. Since
$$\ell - 2K - 2 \geq \log_2(2 |B|^{3N+3} |F|^2 + 1)$$
we have that if $\delta, \delta' \in \Delta$, and $\kappa (\delta) \neq \kappa (\delta')$ then there is $1 \leq i \leq \ell - 2K - 2$ and $a \in A$ such that either
$$\chi_R(a \lambda_{2K+i} \cdot \delta) \neq \chi_R(a \lambda_{2K+i} \cdot \delta'),$$
where $\chi_R$ is the characteristic function of $R$, or else
$$\phi'(a \lambda_{2K+i} \cdot \delta) \neq \phi'(a \lambda_{2K+i} \cdot \delta').$$
It follows in either case that there is some $g\in T_R^{-1}A\Lambda \cup A\Lambda$ with $g\cdot \delta, g \cdot \delta' \in \dom(\phi')$ and $\phi'(g \cdot \delta) \neq \phi'(g \cdot \delta')$.

Fix $T_\Delta$ witnessing that $\Delta$ is $\phi '$-recognizable and let
$$T= (T_\Delta ^{-1}\cup T_R^{-1}A\Lambda \cup A\Lambda )B^2FF^{-1}B^{3N+1}.$$
It remains to show that for every $x \in X$ there is $t \in T$ with $t \cdot x, t s \cdot x \in \dom(\phi')$ and $\phi'(t \cdot x) \neq \phi'(t s \cdot x)$, which will prove part (iv). Fix $x \in X$. By property (2) and the containment $D \subseteq B \cdot \Delta$, there is $h \in B^2 F F^{-1} B^{3N+1}$ with $h \cdot x = \delta \in \Delta$. If $h s \cdot x \not\in \Delta$ then there is some $g \in T_\Delta$ with $\phi ' (g ^{-1}h \cdot x) \neq \phi ' (g^{-1} h s \cdot x)$. So we are done if $h s \cdot x \not\in \Delta$. Now suppose that $h s \cdot x = \delta' \in \Delta$. Then
$$\delta' = h s \cdot x = h s h^{-1} \cdot h \cdot x = h s h^{-1} \cdot \delta.$$
Thus $\delta$ and $\delta'$ are joined by an edge in $\mc{G}$ and
$$\kappa (h \cdot x) = \kappa (\delta) \neq \kappa (\delta') = \kappa (h s \cdot x).$$
It follows from the last remark of the previous paragraph that there is $g \in T_R^{-1}A\Lambda \cup A\Lambda$ with $\phi ' (g h \cdot x) \neq \phi ' (g h s \cdot x)$.
\end{proof}

\begin{proof}[Proof of Theorem \ref{thm:main}.(2)]
Let $G \cc X$ be a free Borel action, let $Y \subseteq X$ be Borel with $X \setminus Y$ syndetic, and let $\phi : Y \rightarrow \{0, 1\}$ be a Borel function. Fix an enumeration $s_1, s_2, \ldots$ of the non-identity elements of $G$. Set $R_0 = \varnothing$, $M_0 = X \setminus Y$, and $\phi_0 = \phi$. We first build a sequence $(\phi_n)_{n \geq 1}$ of Borel functions and sequences $(R_n)_{n \geq 1}$ and $(M_n)_{n \geq 1}$ of syndetic Borel sets. Note that $\phi_0$, $R_0$, and $M_0$ are already defined (although $R_0$ is not syndetic). In general, once $\phi_n$, $R_n$, and $M_n$ have been defined, apply Lemma \ref{IND STEP} using $s = s_{n+1}$, $\phi_n$, $M_n$, and $R_1 \cup \cdots \cup R_n$ to obtain $\phi_{n+1}$, $M_{n+1}$, and $R_{n+1}$. This defines the sequences $(\phi_n)$, $(R_n)$, and $(M_n)$. We have that the $R_n$'s are pairwise disjoint, the $M_n$'s are decreasing, and the $\phi_n$'s are increasing. Define
$$ \phi _\infty : X \setminus \Big(\bigcup_{n \in \N} R_n \Big) \rightarrow \{0, 1\}$$
by setting $\phi _\infty (x) = \phi_n(x)$ if there is $n$ with $x \in \dom(\phi_n)$, and setting $\phi _\infty (x) = 0$ for $x \in \bigcap_{n \in \N} M_n$. Then $\phi _\infty$ is Borel. For $w \in 2^\N$ we extend $\phi _\infty$ to $\phi _w : X \rightarrow \{0, 1\}$ by setting $\phi _w(x) = w(n - 1)$ for $x \in R_n$, $n \geq 1$. Now define $f_w :X \ra 2^G$ by
$$f_w = \wt{\phi}_w ,$$
and let $f:2^\N \times X \ra 2^G$ be the map $f(w,x)=f_w(x)$. Then $f$ is Borel, and for each $x\in X$ the map $f^x: 2^\N \ra 2^G$ is continuous since if $\lim _{i\ra \infty } w_i = w$ in $2^\N$ then for each $g\in G$ we have
\begin{align*}
\lim _{i\ra\infty} f^x(w_i)(g) = \lim _{i\ra\infty}\phi _{w_i}(g^{-1}\cdot x) &=
\begin{cases}
\lim _{i\ra\infty}w_i(n-1)= w(n-1) &\text{if } g^{-1}\cdot x \in R_n \\
\phi _\infty (g^{-1}\cdot x) &\text{if }g^{-1}\cdot x \not\in \bigcup _{n\in \N}R_n
\end{cases}
\\
&=\phi _w (g^{-1}\cdot x) = f^x(w)(g) .
\end{align*}

We fix $w\in 2^\N$ and check that $\ol{f_w(X)}\subseteq \mathrm{Free}(2^G )$. By clause (iv) of Lemma \ref{IND STEP}, for each $n\geq 1$ there is a finite set $T_n\subseteq G$ such that for all $x\in X$ the functions $\wt{\phi} _n(x)\res T_n$ and $\wt{\phi}_n(s_n\cdot x)\res T_n$ are incompatible. Therefore, for each $x\in X$, since $f_w(x)$ and $f_w(s_n\cdot x )=s_n\cdot f_w(x)$ are total functions on $G$ extending $\wt{\phi} _n(x)$ and $\wt{\phi }_n(s_n\cdot x )$ respectively, we have that $f_w(x)\res T_n \neq (s_n\cdot f_w(x))\res T_n$. Therefore, $f_w(X)$ is contained in the closed set $\{ u\in 2^G \csuchthat u\res T_n \neq s_n\cdot u \res T_n \text{ for all }n\geq 1 \}$, which in turn is contained in $\mathrm{Free}(2^G)$.

Fix $w \neq z \in 2^\N$. We must check that the closure of the images of $f_w$ and $f_z$ are disjoint. Let $n \geq 1$ be such that $w(n-1) \neq z(n-1)$. Since $R_n \subseteq X$ is syndetic there is a finite subset $S\subseteq G$ with $S\cdot R_n = X$. Let $T\subseteq G$ be a finite set which witnesses that $R_n$ is $\phi _n$-recognizable. It suffices to show that $f_w(x)\res (S\cup ST ) \neq f_z(y)\res (S\cup ST )$ for all $x,y\in X$, since then it follows that $\{ u\in 2^G \csuchthat (\exists x\in X )(u\res (S\cup ST ) = f_w(x) \res (S\cup ST) ) \}$ and $\{ u\in 2^G \csuchthat (\exists x\in X )(u\res (S\cup ST ) = f_z(x) \res (S\cup ST) ) \}$ are disjoint clopen sets containing $f_w(X)$ and $f_z(X)$ respectively. Given $x,y \in X$, by our choice of $S$ we may find $s\in S$ such that $s^{-1}\cdot x \in R_n$. If $s^{-1}\cdot y\in R_n$ then we have
\[
f_w(x)(s) = \phi _w (s^{-1}\cdot x ) = w(n-1) \neq z(n-1)= \phi _z (s^{-1}\cdot y) = f_z(y)(s) ,
\]
so we are done. We may therefore assume that $s^{-1}\cdot y\not\in R_n$. Since $s^{-1}\cdot x\in R_n$ and $s^{-1}\cdot y\not\in R_n$, by our choice of $T$ we may find some $t\in T$ with $t^{-1}s^{-1}\cdot x , t^{-1}s^{-1} \cdot y \in \dom(\phi_n)$ and $\phi_n(t^{-1}s^{-1}\cdot x ) \neq \phi_n(t^{-1} s^{-1} \cdot y)$. Thus
$$f_w(x)(st) = \phi _w(t^{-1}s^{-1} \cdot x) = \phi_n(t^{-1}s^{-1}\cdot x) \neq \phi_n(t^{-1}s^{-1} \cdot y) = \phi_z(t^{-1}s^{-1}\cdot y) = f_z(y)(st ), $$
which finishes the proof.
\end{proof}

\section{Genericity of maps into the free part} \label{sec:generic}

In this section we deduce Theorem \ref{thm:main}.(3) from Theorem \ref{thm:main}.(1). We will need the following lemma. In what follows, if $G$ acts on a set $Y$ then for $g\in G$ let $\mathrm{Fix}_Y(g) = \{ y\in Y \csuchthat g\cdot y =y \}$.

\begin{lemma}\label{lem:char} Let $G \cc Y$ be a Borel action of $G$, let $\nu$ be a Borel probability measure on $Y$, and let $\mc{P}$ be a countable generating partition for $G\cc (Y,\nu )$. Then for any $g\in G$ we have
\begin{equation}
\label{eqn:char}\nu (\mathrm{Fix}_Y(g)) = \inf \left\{ \sum _{P\in \mc{P}^Q}\nu (g\cdot P \cap P ) \csuchthat Q\subseteq G\text{ is finite} \right\} ,
\end{equation}
where $\mc{P}^Q = \bigvee _{h \in Q}h\cdot \mc{P}$.
\end{lemma}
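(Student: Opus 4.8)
The plan is to establish the two inequalities in \eqref{eqn:char} separately. Throughout, fix $g \in G$, and for each finite $Q \subseteq G$ put $A_Q = \bigcup_{P \in \mc{P}^Q}(g\cdot P \cap P)$. Since $\mc{P}$ is countable and $Q$ is finite, $\mc{P}^Q$ is a countable Borel partition, so $A_Q$ is Borel and the sum in \eqref{eqn:char} makes sense; moreover, as the atoms of $\mc{P}^Q$ are pairwise disjoint and $g\cdot P \cap P \subseteq P$, this union is disjoint, so $\nu(A_Q) = \sum_{P \in \mc{P}^Q}\nu(g\cdot P \cap P)$. A short unwinding of definitions (using that $y \in h\cdot P \iff h^{-1}\cdot y \in P$) shows that $A_Q$ is precisely the set of $y \in Y$ for which $y$ and $g^{-1}\cdot y$ lie in a common atom of $\mc{P}^Q$. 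The inequality ``$\le$'' is then immediate: if $g\cdot y = y$ then $y$ and $g^{-1}\cdot y = y$ trivially lie in the same atom, so $\mathrm{Fix}_Y(g) \subseteq A_Q$ and hence $\nu(\mathrm{Fix}_Y(g)) \le \nu(A_Q)$ for every finite $Q$.

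For ``$\ge$'' I would first record the monotonicity: if $Q \subseteq Q'$ then $\mc{P}^{Q'}$ refines $\mc{P}^Q$, and therefore $A_{Q'}\subseteq A_Q$. Fixing an increasing exhaustion $Q_1 \subseteq Q_2 \subseteq \cdots$ of $G$ by finite sets, the sets $A_{Q_n}$ decrease, and every finite $Q$ is contained in some $Q_n$, so continuity from above gives $\inf_{Q}\nu(A_Q) = \lim_n \nu(A_{Q_n}) = \nu\bigl(\bigcap_n A_{Q_n}\bigr)$. It thus suffices to show that $\bigcap_n A_{Q_n}$ coincides with $\mathrm{Fix}_Y(g)$ up to a $\nu$-null set; since $\mathrm{Fix}_Y(g) \subseteq A_{Q_n}$ for all $n$, only the inclusion $\bigcap_n A_{Q_n} \subseteq \mathrm{Fix}_Y(g)$ (modulo null sets) remains.

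This is the heart of the argument, and it is where the generating hypothesis enters. I would pick a $G$-invariant $\nu$-conull set $Y_0 \subseteq Y$ on which $\mc{P}$ is a generating partition for $G \cc Y_0$, so that by the separation reformulation of generativity quoted in the introduction, any two distinct points of $Y_0$ become separated by $\mc{P}$ after translating by a suitable group element. Now take $y \in Y_0 \cap \bigcap_n A_{Q_n}$ and suppose toward a contradiction that $g^{-1}\cdot y \neq y$. Then $g^{-1}\cdot y \in Y_0$ as well, so there is $k \in G$ with $\mc{P}$ separating $k\cdot y$ and $kg^{-1}\cdot y$; translating this back through $y \in h\cdot P \iff h^{-1}\cdot y \in P$, the points $y$ and $g^{-1}\cdot y$ lie in distinct atoms of $k^{-1}\cdot \mc{P}$, hence in distinct atoms of $\mc{P}^{Q_n}$ for any $n$ with $k^{-1}\in Q_n$ — contradicting $y \in A_{Q_n}$. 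Therefore $g^{-1}\cdot y = y$ on all of $Y_0 \cap \bigcap_n A_{Q_n}$, and since $Y_0$ is conull this yields $\nu\bigl(\bigcap_n A_{Q_n}\bigr) = \nu(\mathrm{Fix}_Y(g))$, which combined with the first two paragraphs completes the proof.

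I do not anticipate a genuine obstacle; the proof is essentially a bookkeeping exercise around two routine identities — the identification of $A_Q$ with $\{y : y$ and $g^{-1}\cdot y$ share a $\mc{P}^Q$-atom$\}$, and the passage between ``$\mc{P}$ separates $k\cdot y$ and $kg^{-1}\cdot y$'' and ``$y$, $g^{-1}\cdot y$ lie in different atoms of $k^{-1}\cdot\mc{P}$'' — plus the standard measure-theoretic limit. The only points deserving explicit mention are that each $A_Q$ is Borel so that all the quantities in sight are well defined, and that the conull set $Y_0$ can be taken $G$-invariant (so that $g^{-1}\cdot y \in Y_0$ whenever $y \in Y_0$), which is automatic from the definition of a generating partition for $G \cc (Y,\nu)$.
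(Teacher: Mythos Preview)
Your proof is correct and takes a genuinely different route from the paper's. For the harder inequality $\geq$, the paper invokes the Borel graph-coloring lemma (Lemma~\ref{lem:color}) to partition $Y\setminus\mathrm{Fix}_Y(g)$ into three Borel pieces $A_0,A_1,A_2$ each satisfying $g\cdot A_i\cap A_i=\varnothing$, sets $A_3=\mathrm{Fix}_Y(g)$, and then uses the $\sigma$-algebra formulation of ``generating'' to approximate $\{A_0,\dots,A_3\}$ in measure by a coarsening of some $\mc{P}^Q$, finishing with a direct $\epsilon$-estimate on $\sum_P\nu(g\cdot P\cap P)$. You instead work pointwise: you identify $A_Q$ with $\{y: y\text{ and }g^{-1}\cdot y\text{ share a }\mc{P}^Q\text{-atom}\}$, pass to a decreasing intersection along an exhaustion of $G$, and use the point-separation formulation of ``generating'' on a $G$-invariant conull $Y_0$ to show $\bigcap_n A_{Q_n}\cap Y_0\subseteq\mathrm{Fix}_Y(g)$. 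Your argument is more elementary in that it avoids the auxiliary coloring lemma entirely and uses only continuity of measure; the paper's argument, by contrast, exploits the approximation-in-measure viewpoint and packages the combinatorics into the single coloring step. Both are short, and the choice between them is largely a matter of taste.
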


\begin{proof}
For any $g\in G$ and $P\subseteq X$ we have $\mathrm{Fix}_Y(g) \cap P \subseteq g\cdot P\cap P$. Therefore, for any $Q\subseteq G$ finite we have $\nu (\mathrm{Fix}_Y(g)) = \sum _{P\in \mc{P}^Q}\nu ( \mathrm{Fix}_Y(g)\cap P) \leq \sum _{P\in \mc{P}^Q} \nu (g\cdot P\cap P)$, and taking the infimum over $Q$ proves the inequality $\leq$ for (\ref{eqn:char}). For the other inequality, given $g\in G$, apply Lemma \ref{lem:color} to the Borel graph $\{ (y,s\cdot y) \csuchthat y\in Y\setminus \mathrm{Fix}_Y(g), \ s\in \{ g, g^{-1}\} \}$ to obtain a Borel partition $\{ A_0,A_1,A_2 \}$ of $Y\setminus \mathrm{Fix}_Y(g)$ with $g\cdot A_i \cap A_i =\emptyset$ for each $i\in \{ 0,1,2 \}$. Let $A_3= \mathrm{Fix}_Y(g)$. Since $\mc{P}$ is generating, for any $\epsilon >0$ we may find a finite $Q\subseteq G$ along with a coarsening $\{ B_0, \dots , B_3 \}$ of $\mc{P}^Q$ such that $\sum _{i\leq 3}\nu (B_i\triangle A_i ) < \epsilon /2$. Then
\begin{align*}
\textstyle{\sum _{P\in \mc{P}^Q} \nu (g\cdot P\cap P ) }&\textstyle{\leq \sum _{i\leq 3}\nu (g\cdot B_i \cap B_i ) \leq \sum _{i\leq 3}\big( \nu (g\cdot A_i \cap A_i ) + 2\nu (B_i\triangle A_i )\big) }\\
&\textstyle{\leq \nu (\mathrm{Fix}_Y(g)) + 2\sum _{i\leq 3}\nu (B_i\triangle A_i ) < \nu (\mbox{Fix}_Y(g) )+ \epsilon .} \qedhere
\end{align*}
\end{proof}

\begin{proposition}\label{prop:Gdelta}
Let $G \cc X$ be a Borel action of the countable group $G$ on a standard Borel space $X$ and let $\mu$ be a $G$-quasi-invariant Borel probability measure on $X$. Then the set
\[
\Es{B}_\mu = \{ [A]_\mu \csuchthat A\subseteq X\text{ is Borel and }f_A\text{ is class-bijective on some }G\text{-invariant conull set}\}
\]
is $G_\delta$ in $\mathrm{MALG}_\mu$.
\end{proposition}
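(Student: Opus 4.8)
The plan is to rewrite membership in $\Es{B}_\mu$ as a countable conjunction of conditions on $[A]_\mu$, each of which is open in $\mathrm{MALG}_\mu$. The first and only delicate step is a pointwise description of class-bijectivity. Since $f_A$ is $G$-equivariant we always have $\mathrm{Stab}(x) \subseteq \mathrm{Stab}(f_A(x))$, and $f_A$ maps the $E$-class $[x]_E = G\cdot x$ onto the $F$-class $[f_A(x)]_F = G\cdot f_A(x)$; a short computation with stabilizers shows this restricted map is injective exactly when $\mathrm{Stab}(f_A(x)) \subseteq \mathrm{Stab}(x)$. Hence $f_A \restriction [x]_E$ is a bijection onto $[f_A(x)]_F$ if and only if $\mathrm{Stab}(x) = \mathrm{Stab}(f_A(x))$. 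The Borel set $P_A = \{x \in X : \mathrm{Stab}(x) = \mathrm{Stab}(f_A(x))\}$ is $G$-invariant (conjugate both stabilizers by a group element and use equivariance), so $f_A$ is class-bijective on some $G$-invariant conull set if and only if $\mu(P_A) = 1$ (take $X_0 = P_A$). Decomposing the inclusion $\mathrm{Stab}(f_A(x)) \subseteq \mathrm{Stab}(x)$ over $g \in G$ and using the containment $\mathrm{Fix}_X(g) \subseteq f_A^{-1}(\mathrm{Fix}_{2^G}(g))$, this amounts to: $\mu\big(f_A^{-1}(\mathrm{Fix}_{2^G}(g))\big) = \mu(\mathrm{Fix}_X(g))$ for every $g \in G$.

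Next I would express $\mu\big(f_A^{-1}(\mathrm{Fix}_{2^G}(g))\big)$ through $A$. From $f_A(x)(h) = 1_{h \cdot A}(x)$ and the definition of the shift one computes, for each $g \in G$,
\[
f_A^{-1}\big(\mathrm{Fix}_{2^G}(g)\big) \;=\; X \setminus \bigcup_{h \in G}\big(h \cdot A \,\triangle\, gh \cdot A\big).
\]
As $Q$ ranges over the finite subsets of $G$, the sets $\bigcup_{h \in Q}(h \cdot A \,\triangle\, gh \cdot A)$ increase to the above union, so by continuity of $\mu$,
\[
\mu\big(f_A^{-1}(\mathrm{Fix}_{2^G}(g))\big) \;=\; \inf\Big\{\, \mu\Big( X \setminus \bigcup_{h \in Q}(h \cdot A \,\triangle\, gh \cdot A) \Big) \;:\; Q \subseteq G \ \text{finite} \,\Big\}
\]
(alternatively this is Lemma~\ref{lem:char} applied to $Y = 2^G$, the measure $(f_A)_*\mu$, and the canonical generating partition $\{C_0, C_1\}$, whose finite refinements pull back under $f_A$ to finite Boolean combinations of translates of $A$). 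Since $X \setminus \bigcup_{h \in Q}(h \cdot A \,\triangle\, gh \cdot A) \supseteq f_A^{-1}(\mathrm{Fix}_{2^G}(g)) \supseteq \mathrm{Fix}_X(g)$ for every finite $Q$, combining this with the previous paragraph gives: $[A]_\mu \in \Es{B}_\mu$ if and only if for every $g \in G$ and every $n \geq 1$ there is a finite $Q \subseteq G$ with $\mu\big(X \setminus \bigcup_{h \in Q}(h \cdot A \,\triangle\, gh \cdot A)\big) < \mu(\mathrm{Fix}_X(g)) + \tfrac{1}{n}$; only this one-sided estimate is needed, the reverse inequality being automatic.

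Finally I would check openness and assemble the answer. For a fixed $h \in G$ the map $[A]_\mu \mapsto [h \cdot A]_\mu$ is a well-defined homeomorphism of $\mathrm{MALG}_\mu$: by $G$-quasi-invariance the measure $D \mapsto \mu(h \cdot D)$ is equivalent to $\mu$, so $\mu(A \triangle B) \to 0$ forces $\mu(h \cdot A \,\triangle\, h \cdot B) = \mu\big(h \cdot (A \triangle B)\big) \to 0$. Since complementation, finite unions, and symmetric differences are continuous on $\mathrm{MALG}_\mu$, for fixed $g$ and finite $Q$ the map $[A]_\mu \mapsto \big[X \setminus \bigcup_{h \in Q}(h \cdot A \,\triangle\, gh \cdot A)\big]_\mu$ is continuous; composing with the $1$-Lipschitz map $[C]_\mu \mapsto \mu(C)$ shows that the set $\big\{ [A]_\mu : \mu\big(X \setminus \bigcup_{h \in Q}(h \cdot A \,\triangle\, gh \cdot A)\big) < \mu(\mathrm{Fix}_X(g)) + \tfrac{1}{n} \big\}$ is open. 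Therefore
\[
\Es{B}_\mu \;=\; \bigcap_{g \in G}\ \bigcap_{n \geq 1}\ \bigcup_{\substack{Q \subseteq G \\ \text{finite}}} \Big\{\, [A]_\mu \;:\; \mu\Big( X \setminus \bigcup_{h \in Q}(h \cdot A \,\triangle\, gh \cdot A) \Big) < \mu(\mathrm{Fix}_X(g)) + \tfrac{1}{n} \,\Big\}
\]
is a countable intersection of open subsets of $\mathrm{MALG}_\mu$, hence $G_\delta$.

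I expect essentially all the difficulty to sit in the first step: pinning down that ``$f_A$ is class-bijective on some $G$-invariant conull set'' is the same as the stabilizer equality $\mathrm{Stab}(x) = \mathrm{Stab}(f_A(x))$ holding $\mu$-a.e., and in particular noticing that this pointwise property is itself $G$-invariant, so the existential quantifier over conull invariant sets collapses to an ``almost everywhere'' statement. The rest is routine continuity bookkeeping in $\mathrm{MALG}_\mu$, the only subtlety being that continuity of the translation maps there genuinely requires quasi-invariance of $\mu$, not merely invariance.
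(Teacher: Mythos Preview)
Your proof is correct and follows essentially the same route as the paper's: both reduce membership in $\Es{B}_\mu$ to the condition $((f_A)_*\mu)(\mathrm{Fix}_{2^G}(g)) \leq \mu(\mathrm{Fix}_X(g))$ for all $g\in G$, rewrite the left side as an infimum over finite $Q\subseteq G$, and then check openness using continuity of the $G$-action and of Boolean operations on $\mathrm{MALG}_\mu$. Your quantity $\mu\big(X\setminus\bigcup_{h\in Q}(h\cdot A\,\triangle\,gh\cdot A)\big)$ is literally the paper's $\sum_{\sigma\in 2^Q}\mu(g\cdot A_\sigma\cap A_\sigma)$ in different notation; you reach it by computing $f_A^{-1}(\mathrm{Fix}_{2^G}(g))$ directly and applying monotone convergence, whereas the paper routes through Lemma~\ref{lem:char}, but this is a cosmetic difference (and you note the equivalence yourself).
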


Note that if $G\cc X$ is free then being class-bijective is equivalent to $f_A(X) \subseteq \mathrm{Free}(2^G)$ and thus the set $\Es{B}_\mu$ coincides with the set from Theorem \ref{thm:main}.(3). Specifically, if $G \acts X$ is free and $f_A : X \rightarrow 2^G$ fails to be class-bijective on an invariant null set $Z \subseteq X$, then we can apply Theorem \ref{thm:main} to $G \acts Z$ to get an equivariant class-bijcetive map $f_0 : Z \rightarrow \mathrm{Free}(2^G)$. Now the function $f_0 \cup (f_A \res (X \setminus Z))$ is equivariant and class-bijective and is of the form $f_B$ where $[B]_\mu = [A]_\mu$.

\begin{proof}
Note that if $[A]_\mu = [B]_\mu$ then $f_A$ and $f_B$ agree on a conull subset of $X$. For every Borel $A\subseteq X$, since $f_A$ is $G$-equivariant we have $\mathrm{Fix}_X(g)\subseteq f_A^{-1}(\mathrm{Fix}_{2^G}(g))$ for all $g\in G$. We now claim that
\begin{equation}\label{eqn:Bmu}
[A]_\mu \in \Es{B}_\mu \IFF \forall g\in G,  \ ((f_A)_*\mu )(\mathrm{Fix}_{2^G}(g) ) \leq \mu ( \mathrm{Fix}_X(g) ) .
\end{equation}
Indeed, if $[A]_\mu \in \Es{B}_\mu$ then there is an invariant conull $X_0\subseteq X$ such that $f_A\res X_0$ is class-bijective, hence for every $g\in G$ we have $X_0 \cap f_A^{-1}(\mathrm{Fix}_{2^G}(g)) = \{ x \in X_0\csuchthat f_A(g\cdot x) = f_A(x) \} = \{ x\in X_0\csuchthat g \cdot x = x \} = X_0\cap \mathrm{Fix}_X(g)$. Conversely, suppose that the condition on the right side holds. Then for each $g\in G$ we have $\mu ( \{ x \in X\csuchthat g\cdot x \neq x \text{ and }f_A(g\cdot x )= f_A(x) \} ) = 0$, and since $G$ is countable this implies $\mu ( \{ x \in X\csuchthat (\exists g \in G )( g \cdot x \neq x \text{ and } f_A (g\cdot x ) = f_A (x)  ) \} ) = 0$. If we let $X_0$ denote the complement of this last set, then $X_0$ is $G$-invariant, conull, and $f_A\res X_0$ is class-bijective.

Let $\mc{P}$ denote the canonical generating partition for $G\cc 2^G$, i.e., $\mc{P} = \{ C_0, C_1 \}$, where $C_0= \{ w\in 2^G\csuchthat w(1_G)= 0 \}$ and $C_1 = \{ w\in 2^G\csuchthat w(1_G)=1 \}$. Given $Q\subseteq G$ finite and $\sigma \in 2^Q$ let $C_\sigma = \{ w\in 2^G \csuchthat w\text{ extends }\sigma \}$, and for $A\subseteq X$ Borel let $A_\sigma = \bigcap _{h\in Q}h\cdot A_{\sigma (h)}$, where $A_0 = X\setminus A$ and $A_1 = A$. Then $f _A ^{-1} (C_\sigma ) = A_\sigma$ and $\mc{P}^Q = \{ C_\sigma \} _{\sigma \in 2^Q}$. Therefore, by equation (\ref{eqn:Bmu}) and Lemma \ref{lem:char}, we have $[A]_\mu \in \Es{B}_\mu$ if and only if
\[
\forall g\in G ,\ \inf \left\{ \sum _{\sigma \in 2^Q} \mu (g\cdot A_\sigma \cap A_\sigma ) \csuchthat Q\subseteq G\text{ is finite}\right\} \leq \mu (\mbox{Fix}_X (g)).
\]
This shows that
\[
\Es{B}_\mu = \bigcap _{g\in G} \bigcap _{\epsilon > 0}\bigcup _{\substack{Q\subseteq G \\ \text{finite}}} \Big\{ [A]_\mu \in \mathrm{MALG}_\mu \csuchthat \sum _{\sigma \in 2^Q} \mu (g\cdot A_\sigma \cap A_\sigma ) < \mu (\mbox{Fix}_X (g )) + \epsilon  \Big\} .
\]
To prove that $\mc{B}_\mu$ is $G_\delta$ it therefore remains to show that for any $g\in G$, $Q\subseteq G$ finite and $r\in \R$, the set $\{ [A]_\mu \in \mathrm{MALG}_\mu \csuchthat \sum _{\sigma \in 2^Q} \mu (g\cdot A_\sigma \cap A_\sigma ) < r \}$ is open in $\mathrm{MALG}_\mu$. This follows from the fact that the action $G\cc \mathrm{MALG}_\mu$ is continuous (since $\mu$ is $G$-quasi-invariant), and the Boolean operations are continuous on $\mathrm{MALG}_\mu$, hence the map $[A]_\mu\mapsto \sum _{\sigma \in 2^Q}\mu (g\cdot A_\sigma \cap A_\sigma )$ is continuous on $\mathrm{MALG}_\mu$.
\qedhere[Proposition \ref{prop:Gdelta}]
\end{proof}

\begin{proof}[Proof of Theorem \ref{thm:main}.(3)]
Let $\mu$ be a $G$-quasi-invariant Borel probability measure on $X$. We must show that the set
\[
\{ [A]_\mu \csuchthat A\subseteq X\text{ is Borel and }f_A(X)\subseteq \mathrm{Free}(2^G)\}
\]
is dense $G_\delta$ in $\mathrm{MALG}_\mu$. It is $G_\delta$ by Proposition \ref{prop:Gdelta}, so it remains to show it is dense. We will in fact show that the set $\{ [A]_\mu \csuchthat A\subseteq X\text{ is Borel and }\ol{f_A(X)}\subseteq \mathrm{Free}(2^G) \}$ is dense in $\mathrm{MALG}_\mu$, and we note that the argument does not use quasi-invariance of $\mu$. Fix a Borel subset $B \subseteq X$ and $\epsilon >0$. By Proposition \ref{prop:synd} there exists a syndetic Borel subset $M\subseteq X$ with $\mu (M) < \epsilon$. Let $Y= X\setminus M$ and let $\phi : Y \ra 2$ be given by $\phi (y)=1_B(y)$ for $y\in Y$. By Theorem \ref{thm:main}.(1) there exists a $G$-equivariant Borel map $f: X\ra 2^G$ with $\ol{f(X)}\subseteq \mathrm{Free}(2^G)$ with $f(y)(1_G)= \phi (y) = 1_B(y)$ for all $y\in Y$. Let $A= \{ x\in X\csuchthat f(x)(1_G) = 1 \}$. Then $A$ is Borel, $f_A = f$, and $A\triangle B \subseteq X\setminus Y = M$, hence $\mu (A\triangle B ) < \epsilon$.
\end{proof}

\subsection*{Acknowledgments}\addcontentsline{toc}{section}{Acknowledgements}
This research was initiated while the authors attended a descriptive set theory workshop at the Erwin Schr\"{o}dinger Institute in Vienna and the Arbeitsgemeinschaft: Sofic Entropy workshop at the Mathematisches Forschungsinstitut Oberwolfach in Oberwolfach. The authors thank the ESI and the MFO for their kind hospitality and travel support. The authors would also like to thank Andrew Marks for asking the question which sparked this collaboration. The first author was supported by the National Science Foundation Graduate Student Research Fellowship under grant DGE 0718128. The second author was supported by NSF grant DMS 1303921.


\bibliographystyle{amsalpha}
\bibliography{biblio}

\end{document}